\documentclass[12pt,reqno]{amsart}
\usepackage[utf8]{inputenc}
\usepackage{amsmath,amsthm,amssymb,url,mathdots,verbatim}

\usepackage{tikz}
\usepackage[centerboxes]{ytableau}
\usepackage[colorlinks=true]{hyperref}
\usepackage{MnSymbol}
\usepackage{eurosym}
\usepackage{graphicx}
\usepackage{scalerel}
\usepackage{enumitem}
\usepackage[normalem]{ulem}
\usepackage[capitalise]{cleveref}
\crefname{figure}{Figure}{Figures}
\usepackage{comment}

\numberwithin{equation}{section}

\newtheorem{theorem}{Theorem}
\newtheorem{proposition}[theorem]{Proposition}
\newtheorem{corollary}[theorem]{Corollary}
\newtheorem{lemma}[theorem]{Lemma}

\theoremstyle{definition}

\theoremstyle{remark}
\newtheorem{remark}[theorem]{Remark}

\def\sgn{\operatorname{sgn}}
\def\Pf{\operatorname{Pf}}
\def\Id{\operatorname{Id}}
\def\inv{\operatorname{inv}}
\def\coef#1{\left\langle#1\right\rangle}

\def\la{\lambda}
\def\si{\sigma}
\def\J{\mathcal J}

\begin{document}

\title{More minor summation formulae}

\author[Chern et al.]{Shane Chern}

\author[]{Theresia Eisenk\"olbl}

\author[]{Ilse Fischer}

\author[]{Moritz Gangl}

\author[]{Mona Gatzweiler}

\author[]{\'Alvaro Guti\'errez}

\author[]{Christian Krattenthaler}

\author[]{Nishu Kumari}

\author[]{Markus Reibnegger}

\author[]{Marcus Sch\"onfelder}

\author[]{Atsuro Yoshida}

\address{\noindent
S.C., T.E., I.F., M.G., M.G., C.K., N.K., M.R., M.S., A.Y.:\newline
Fakult\"at f\"ur Mathematik, Universit\"at Wien,
Oskar-Morgenstern-Platz~1, A-1090 Vienna, Austria.\newline
\rm
\url{https://shanechern.github.io}\newline
\url{http://www.mat.univie.ac.at/~teisenko}\newline
\url{http://www.mat.univie.ac.at/~ifischer}\newline
\url{https://homepage.univie.ac.at/moritz.gangl}\newline
\url{http://www.mat.univie.ac.at/~kratt}\newline
}

\address{A.G.: University of Bristol, Bristol, UK}

\thanks{S.C., I.F., C.K., N.K., M.R. and M.S. acknowledge support from the Austrian Science Fund (FWF) grant 10.55776/F1002.  M.G. acknowledges support from the FWF grant 10.55776/P34931.
}

\begin{abstract}
We prove determinantal-Pfaffian formulae that simultaneously
generalise the Pfaffian minor summation formula of Ishikawa and
Wakayama and Byun's recent minor summation formula. These formulae
are based on factorisation formulae for the determinant of the sum of
a skew-symmetric matrix and a rank-1 matrix. Applications include
a Cauchy-type identity for skew Schur functions.
\end{abstract}

\subjclass[2020]{Primary 15A15; Secondary 05A15 05A19 05B45}

\keywords{Determinants, Pfaffians, minor summation formula, 
non-intersecting paths}

\maketitle

\section{Introduction and statement of main results}

{\it Non-intersecting paths} are instrumental in many areas of enumerative
combinatorics, in particular in the enumeration of plane partitions
and tableaux, see e.g.\ \cite[Sec.~10.13]{KratCL} and \cite[Sec.~4]{KratCM}.
Moreover, they also appear in the guise of {\it vicious walkers}
as objects of their own interest in statistical physics,
cf.~\cite{FishAA}.

The fundamental result on the enumeration of non-intersecting paths
is Lindstr\"om's determinantal formula \cite[Theorem~1]{LindAA} 
for the number of non-intersecting paths in an acyclic directed graph which
have fixed starting and ending points. This theorem was rediscovered
several times --- and also anticipated in a more restricted form,
see \cite[Footnote~5]{KratBW}.

In several situations --- most prominently in the enumeration of
plane partitions with symmetry (cf.\ \cite[Sec.~6]{KratCM}),
it is necessary to count families of non-intersecting lattice
paths where either starting or ending points are not fixed.
In view of Lindstr\"om's determinantal formula, this leads us to
the problem of computing the sum of all maximal minors of a
rectangular matrix. Okada's {\it minor summation formula}
\cite[Theorem~3]{OkadAA} provides a Pfaffian for this sum
(cf.\ also \cite[Theorem~3.1]{StemAE}).

\begin{theorem} \label{thm:Okada}
Let $m$ and $n$ be positive integers, where $m$ is even.
Furthermore, let $A$ be an $m\times n$ matrix. Then we have
\begin{equation} \label{eq:MS} 
\underset{|I|=m}{\sum_{I\subseteq[n]}}
 \det A^I
 =
\Pf\bigl(AU_nA^t-AU_n^tA^t\bigr),
%=\Pf
% \left(
%  \sum_{1 \le r < s \le n} \left( A_{i,r} A_{j,s} - A_{i,s} A_{j,r} \right)
% \right)_{1 \le i < j \le m},
\end{equation}
where, by definition, $[n]:=\{1,2,\dots,n\}$, where
$A^I$ denotes the submatrix of $A$ consisting of the columns
indexed by~$I$,
and $U_n$ is the upper triangular matrix with all entries
above the diagonal equal to~$1$ and all other entries equal to~$0$.
\end{theorem}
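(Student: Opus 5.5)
The plan is to expand the Pfaffian on the right-hand side with the Cauchy--Binet formula for Pfaffians, and then reduce to a Pfaffian identity for a single fixed skew-symmetric matrix. Put $S:=U_n-U_n^t$. This is the $n\times n$ skew-symmetric matrix with $S_{rs}=1$ for $r<s$ and $S_{rs}=-1$ for $r>s$, so the argument of the Pfaffian is $ASA^t$. We use the minor summation form of Cauchy--Binet (Ishikawa--Wakayama): for an $m\times n$ matrix $A$ with $m$ even and a skew-symmetric $n\times n$ matrix $S$,
\begin{equation*}
\Pf\bigl(ASA^t\bigr)=\underset{|I|=m}{\sum_{I\subseteq[n]}}\det A^I\,\Pf\bigl(S_I^I\bigr),
\end{equation*}
where $S_I^I$ is the principal submatrix of $S$ with rows and columns indexed by $I$. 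Given this, it suffices to show that $\Pf(S_I^I)=1$ for every $I$ with $|I|=m$.

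That evaluation is straightforward. Any principal submatrix of $S$ of even order $m$ has exactly the same form as $S$ itself, with $+1$ above the diagonal and $-1$ below. So we need $\Pf(T_m)=1$, where $T_m$ denotes this matrix. We would prove it by Pfaffian expansion along the first row,
\begin{equation*}
\Pf(T_m)=\sum_{j=2}^m(-1)^j\,\Pf\bigl(T_m\bigr)_{\hat1\hat j}.
\end{equation*}
Each minor $(T_m)_{\hat1\hat j}$ equals $T_{m-2}$, so induction gives $\sum_{j=2}^m(-1)^j=1$, since $m$ is even. Alternatively, we can observe that every perfect matching contributes its crossing sign, and that the signed count of matchings is $1$.

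The main obstacle is the Cauchy--Binet step itself, if it is not taken as known. To prove it, we would write $\Pf(ASA^t)$ via the Pfaffian's definition as a signed sum over perfect matchings of $[m]$. Expanding each entry $(ASA^t)_{ij}=\sum_{r,s}A_{ir}S_{rs}A_{js}$ then gives a sum over maps $\phi\colon[m]\to[n]$ of $\prod_i A_{i\phi(i)}$ times a signed matching-sum of entries of $S$.

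Two things remain to check. First, terms where $\phi$ is not injective must cancel. We would handle this with a sign-reversing involution that swaps the two preimages of the smallest repeated value, using the skew-symmetry of $S$. Second, for injective $\phi$ with image $I$, the coefficient must assemble into $\sgn$ of the sorting permutation times $\Pf(S_I^I)$. That is precisely the expansion of $\det A^I\,\Pf(S^I_I)$.

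An alternative route avoiding general Cauchy--Binet is to write the sum as $\Pf\begin{pmatrix}0&A\\-A^t&S\end{pmatrix}$ up to sign and then apply the Schur complement formula for Pfaffians, which requires $S$ to be invertible. The involution argument handles all $n$ uniformly, so we would use that.
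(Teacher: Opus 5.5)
Your proof is correct, but it takes a different route from the paper. Writing $AU_nA^t-AU_n^tA^t=ASA^t$ with $S=U_n-U_n^t$, the Pfaffian minor summation formula (Theorem~\ref{thm:IsWa}) reduces the claim to $\Pf(S_I^I)=1$. Every such principal submatrix is $T_m$, and the first-row expansion gives $\Pf(T_m)=\sum_{j=2}^m(-1)^j=1$. Your direct proof of the expansion, summing over maps $\phi\colon[m]\to[n]$, is also sound.

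The paper does not prove Theorem~\ref{thm:Okada} itself; it quotes it from Okada. It recovers it as the case $A=B$ of \eqref{eq:AB22}, that is, from Theorem~\ref{thm:main2} with $X=U_n$. There the skew matrix stays split as $X-X^t$ with $X=U_n$ non-symmetric, so the expansion runs over $\det\bigl((U_n)_{I,J}\bigr)\det(A^IA^J)$ rather than over principal Pfaffian minors. Lemma~\ref{lem:X1} shows that $\det\bigl((U_n)_{I,J}\bigr)$ is $1$ exactly when $i_1<j_1\le i_2<j_2\le\cdots$, and $0$ otherwise. For $A=B$, the terms with some $j_k=i_{k+1}$ vanish because of a repeated column. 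Reordering $A^IA^J$ into increasing column order then gives a sign $(-1)^{\binom{m/2}2}$ that cancels the one in \eqref{eq:4}.

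The trade-off is this. Your evaluation step is a one-line induction, whereas the paper needs the case analysis of Lemma~\ref{lem:X1}. In return, the paper does not use $A=B$ until the last moment, so the same computation yields the two-matrix identities \eqref{eq:AB22} and \eqref{eq:AB23} for free. Your route cannot see these, because the splitting $S=X-X^t$ is lost. The underlying expansion proofs are close cousins: the multilinearity reduction to $0$--$1$ matrices in the proof of Theorem~\ref{thm:main2} is your sum over $\phi$, with ``two equal rows'' playing the role of non-injective $\phi$. The paper gets Theorem~\ref{thm:IsWa} only indirectly, via the induction in Lemma~\ref{lem:IsWa}, so your direct proof of it is a genuine shortcut.

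Two small points follow.
\begin{enumerate}
\item For non-injective $\phi$ with $\phi(a)=\phi(b)$, exchanging $a$ and $b$ in $\phi$ changes nothing. The involution has to act on the matching $\mu$ for fixed $\phi$. This amounts to $\Pf\bigl((S_{\phi(i)\phi(j)})_{i,j}\bigr)=0$ when two rows and columns coincide, with matchings containing $\{a,b\}$ killed by $S_{rr}=0$.
\item Your closing aside is inaccurate as stated. $\Pf\begin{pmatrix}0&A\\-A^t&S\end{pmatrix}$ expands over complementary Pfaffians $\Pf(S_{\bar I}^{\bar I})$ with $I$-dependent signs, and the Schur complement produces $\Pf(AS^{-1}A^t)$, not $\Pf(ASA^t)$. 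Since you do not rely on it, the proof is unaffected.
\end{enumerate}
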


This theorem can also be used in the case where $m$ is odd, that is,
for the evaluation of the sum of odd-size minors: augment the
$m\times n$ matrix~$A$ (with $m$~odd) to the $(m+1)\times(n+1)$
matrix~$\hat A$ be defining
$$
\hat A_{i,j}=\begin{cases}
A_{i,j},&\text{for $1\le i\le m$ and $1\le j\le n$},\\
1,&\text{for $i=m+1$ and $j=n+1$},\\
0,&\text{otherwise,}
\end{cases}
$$
and apply Theorem~\ref{thm:Okada} to the augmented matrix~$\hat A$.

Ishikawa and Wakayama~\cite{IsWaAA} have generalised~\eqref{eq:MS}
to the {\it Pfaffian minor summation formula}, see
Theorem~\ref{thm:IsWa} in Subsection~\ref{sec:IsWa}.

Recently, Byun~\cite[Theorem~1.3]{ByunAA} came up with a new
formula for the sum of all maximal minors of a rectangular matrix.

\begin{theorem} \label{thm:Byun}
Let $m$ and $n$ be positive integers.
Furthermore, let $A$ be an $m\times n$ matrix. Then we have
\begin{equation} \label{eq:MS2} 
\left(\underset{|I|=m}{\sum_{I\subseteq[n]}}
 \det A^I\right)^2
 =
\det\bigl(A(2U_n+\Id_n)A^t\bigr),
\end{equation}
with the same conventions as in Theorem~\ref{thm:Okada}.
Here, and in the sequel, $\Id_n$ stands for the $n\times n$ identity
matrix.
\end{theorem}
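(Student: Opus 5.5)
The plan is to reduce Theorem~\ref{thm:Byun} to Okada's formula (Theorem~\ref{thm:Okada}) by splitting $2U_n+\Id_n$ into a skew-symmetric part and a rank-one part. Let $J_n$ denote the $n\times n$ all-ones matrix. Since $U_n+U_n^t+\Id_n=J_n$, we have
\[
2U_n+\Id_n=(U_n-U_n^t)+J_n .
\]
Put $M:=A(U_n-U_n^t)A^t$, which is a skew-symmetric $m\times m$ matrix, and let $v:=A\mathbf 1$ be the vector of row sums of $A$. Then
\[
A(2U_n+\Id_n)A^t=M+vv^t .
\]
For any square matrix, the matrix determinant lemma in adjugate form gives
\[
\det(M+vv^t)=\det M+v^t\operatorname{adj}(M)\,v .
\]
I would then split into cases according to the parity of $m$.

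\emph{Case $m$ even.} The adjugate of a skew-symmetric matrix of even order is itself skew-symmetric, since $\operatorname{adj}(M)^t=\operatorname{adj}(M^t)=(-1)^{m-1}\operatorname{adj}(M)$. Hence $v^t\operatorname{adj}(M)v=0$. It follows that
\[
\det\bigl(A(2U_n+\Id_n)A^t\bigr)=\det M=\Pf(M)^2 .
\]
By \eqref{eq:MS}, $\Pf(M)$ is exactly $\sum_{|I|=m}\det A^I$, which gives the claim.

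\emph{Case $m$ odd.} Now $\det M=0$. The key fact I would use is that the adjugate of an odd-order skew-symmetric matrix has rank at most one: $\operatorname{adj}(M)=ww^t$ with $w_i=(-1)^{i-1}\Pf(M_{\hat i\hat i})$, where $M_{\hat i\hat i}$ is $M$ with row and column $i$ deleted. This is the classical identity expressing the minors of a skew-symmetric matrix in terms of Pfaffians. Consequently
\[
\det(M+vv^t)=(w^tv)^2 .
\]
Expanding the Pfaffian of the bordered skew-symmetric matrix $\begin{pmatrix}M&v\\-v^t&0\end{pmatrix}$ along its last row shows that $w^tv=\pm\Pf\begin{pmatrix}M&v\\-v^t&0\end{pmatrix}$. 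The sign does not matter because the quantity is squared.

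It then remains to identify this bordered Pfaffian. Take the augmented matrix $\hat A$ described after Theorem~\ref{thm:Okada} and compute $\hat A(U_{n+1}-U_{n+1}^t)\hat A^t$ directly:
\begin{itemize}
\item its top-left $m\times m$ block is $M$;
\item for $i\le m$, its $(i,m+1)$ entry is $\sum_{r<s}\bigl(\hat A_{i,r}\hat A_{m+1,s}-\hat A_{i,s}\hat A_{m+1,r}\bigr)=\sum_{r\le n}A_{i,r}=v_i$, because $\hat A_{m+1,s}$ is nonzero only for $s=n+1$.
\end{itemize}
So the bordered matrix equals $\hat A(U_{n+1}-U_{n+1}^t)\hat A^t$. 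By Theorem~\ref{thm:Okada}, its Pfaffian is $\sum_{|I|=m+1}\det\hat A^I$. Only subsets $I$ containing $n+1$ contribute, and expanding along the last row, each of those minors equals the corresponding $\det A^{I\setminus\{n+1\}}$. This yields $\sum_{|I|=m}\det A^I$ and completes the odd case.

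I expect the only genuinely nontrivial ingredient to be the rank-one structure $\operatorname{adj}(M)=ww^t$ for odd skew-symmetric $M$, together with keeping the signs consistent between $w$ and the bordered-Pfaffian expansion. For this I would quote the standard identity $\det M_{\hat i\hat j}=\pm\Pf(M_{\hat i\hat i})\Pf(M_{\hat j\hat j})$. Alternatively, I would argue by genericity: for generic odd skew-symmetric $M$, the matrix $\operatorname{adj}(M)$ has rank one (because $\operatorname{rank}M=m-1$) and is symmetric, so it equals $\pm ww^t$, with $w$ identified via the Pfaffian Laplace expansion.
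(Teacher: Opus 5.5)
Your proof is correct, and it is a more direct route than the paper's. The paper never proves Byun's formula on its own. It obtains it as the case $A=B$ of Theorem~\ref{thm:AB}, which is Theorem~\ref{thm:main1} with $X=U_n+\Id_n$. That proof uses:
\begin{itemize}
\item the combinatorial (Stembridge-style) factorisations of Propositions~\ref{prop:1} and~\ref{prop:2};
\item the Pfaffian expansions of Theorem~\ref{thm:main2} and Lemma~\ref{lem:2};
\item irreducibility of the Pfaffian, for even $m$;
\item the minor evaluations of Lemmas~\ref{lem:X1} and~\ref{lem:X2}, after which setting $A=B$ kills all terms with coinciding indices.
\end{itemize}
Your splitting $A(2U_n+\Id_n)A^t=A(U_n-U_n^t)A^t+vv^t$ is the same skew-plus-rank-one decomposition, and your two parity cases are exactly Corollary~\ref{cor:5}. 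You derive them differently, from $\det(M+vv^t)=\det M+v^t\operatorname{adj}(M)v$ and the classical adjugate structure of skew-symmetric matrices; via that lemma, this structure is equivalent to Propositions~\ref{prop:1} and~\ref{prop:2}. You then identify the square root by quoting Okada's formula, with $\hat A$ handling odd $m$. Your check that $\hat A(U_{n+1}-U_{n+1}^t)\hat A^t$ is the bordered matrix, and that only subsets containing $n+1$ contribute, is right.

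What your route buys is brevity: because the rank-one term is symmetric, the determinant is automatically a square, so only one Pfaffian has to be identified. The cost is dependence on Okada's theorem. The paper's route is independent of Okada; it recovers Okada as the case $A=B$ of Theorem~\ref{thm:AB2}. It also covers $A\ne B$, where the two factors differ and must be identified separately, which is what the heavier machinery is for.

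One caution about your fallback genericity argument for odd $m$. The global sign in ``$\operatorname{adj}(M)=\pm ww^t$'' is not harmless, since a minus sign would flip the sign of $\det(M+vv^t)$. Moreover, rank one plus symmetry only gives $\operatorname{adj}(M)=uu^t$ with $u_i=\pm w_i$ coordinatewise. To close the argument:
\begin{itemize}
\item Note that $Mw=0$: the $k$-th entry is a bordered Pfaffian whose border repeats, up to sign, the $k$-th row of $M$, so it vanishes.
\item Hence the columns of $\operatorname{adj}(M)$ lie in $\ker M=\operatorname{span}(w)$, and symmetry gives $\operatorname{adj}(M)=c\,ww^t$.
\item Finally, $c=1$ follows from the diagonal entries $\det M_{\hat i\hat i}=\Pf(M_{\hat i\hat i})^2$.
\end{itemize}
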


We point out that a particularly intriguing feature of this
striking formula is the fact that it holds for even {\it and\/}
odd~$m$ alike, as opposed to Okada's formula in
Theorem~\ref{thm:Okada}.

For us it was clear that the formula~\eqref{eq:MS2} could not just
be an isolated coincidence; something more general had to be lurking
behind. Consequently, we set us the goal to find the hidden, more
general identities that were waiting to be discovered.

\medskip
The purpose of this article is to report our corresponding findings.
They let us discover much more general identities, special cases of
which also cover the Pfaffian minor summation formula of Ishikawa
and Wakayama. Moreover, as by-products, we also came across
Pfaffian and determinantal identities that, as we believe, are
interesting in their own right.

\medskip
Since we also think that the path of discovery is interesting and
instructive,
we next describe how we went along and approached our goal, which in
the end let us arrive at our first main result, Theorem~\ref{thm:main1}.

Our first thought was that, with $A$ and $A^t$ appearing
in~\eqref{eq:MS2}, there should actually be {\it two} rectangular
matrices, $A$~and~$B$, in the sought-for generalisation.
It did not make sense to make $A^t$ into~$B^t$ since this does
not give anything ``nice" (whatever that means) already in the
case where $m=1$. However, in~\eqref{eq:MS2} there is this term
$2U_n$; why not split it into $U_n+U_n$? Then an ``$A$--$B$"
generalisation suggests itself: what if, instead of the right-hand
side of~\eqref{eq:MS2}, we consider
\begin{equation} \label{eq:AB1} 
\det\bigl(AU_nB^t+BU_nA^t+AB^t\bigr)\ ?
\end{equation}
We went to the computer, and the computer told us that this
determinant decomposed into the product of two factors.
Not a square anymore, it was also not the product of one factor
depending on~$A$ and the other depending on~$B$; rather both
factors depended on both~$A$ {\it and\/}~$B$. In any case, we
managed to work out guesses for the factors, see Theorem~\ref{thm:AB} in
Subsection~\ref{sec:AB}
for the complete factorisation.

Nevertheless, the asymmetry in \eqref{eq:AB1} kept us thinking.
If we interchanged $A$ and~$B$ in~\eqref{eq:AB1}, then, according to
our previous (experimental) findings, also
\begin{equation} \label{eq:AB2} 
\det\bigl(AU_nB^t+BU_nA^t+BA^t\bigr)
\end{equation}
factorised into two factors, as predicted in Theorem~\ref{thm:AB}.
So, maybe we should interpolate between~\eqref{eq:AB1}
and~\eqref{eq:AB2}? The most straightforward way to do this
was to connect $AB^t$ and $BA^t$ by a convex combination
of the two; that is, to consider
\begin{equation} \label{eq:AB3} 
\det\bigl(AU_nB^t+BU_nA^t+\la AB^t+(1-\la)BA^t\bigr),
\end{equation}
where $\la$ is some additional variable. We went again to the
computer, which told us that this determinant decomposed into
two factors.

However, this is still asymmetric. If we transpose the matrix inside
the determinant in~\eqref{eq:AB3}, then our computer calculations
show that, with $L_n$ denoting the transpose of~$U_n$, also
$$
\det\bigl(AL_nB^t+BL_nA^t+\la BA^t+(1-\la)AB^t\bigr)
$$
decomposes into a product of two factors. So then, what about
the convex combination
\begin{equation} \label{eq:AB4} 
\det\Bigl(\mu\bigl(AU_nB^t+BU_nA^t\bigr)
+(1-\mu)\bigl(AL_nB^t+BL_nA^t\bigr)
+\la AB^t+(1-\la)BA^t\Bigr)\ ?
\end{equation}
The computer told us that this determinant decomposes into a
product of two factors.

It did not take us very long to realise that this was still not
``general enough". Let us replace $\mu U_n+(1-\mu)L_n+\la\Id_n$
in~\eqref{eq:AB4}
by an {\it arbitrary} $n\times n$ matrix~$X$. The computer still
said that this would factor. Leaving the path to the
identification of
the factors aside, this leads us to our first main result.

\begin{theorem} \label{thm:main1}
Let $A$ and $B$ be $m\times n$ matrices,
$X$ an $n\times n$ matrix, and
$\J =(1)_{1\le i,j\le n}$. If $m$ is even, we have
\begin{equation} \label{eq:1}
\det\bigl(AXB^t+B(\J -X^t)A^t\bigr)
=f_{A,B}(X)f_{B,A}(\J -X^t)
\end{equation}
where
$$
f_{A,B}(X)=\underset {|I|=|J|=m/2}{\sum_{I,J\subseteq[n]}}
\det X_{I,J} \det(A^IB^J),
$$
with $X_{I,J}$ denoting the submatrix of~$X$ consisting of the entries
in the 
rows indexed by~$I$ and columns indexed by~$J$, $A^I$ denoting
the submatrix of~$A$ consisting of the columns indexed by~$I$,
with an analogous meaning of~$B^J$, and with $A^IB^J$ denoting
the concatenation of the matrices $A^I$ and $B^J$.

If $m$ is odd, we have
\begin{equation} \label{eq:1o}
\det\bigl(AXB^t+B(\J -X^t)A^t\bigr)
=g_{A,B}(X)g_{B,A}(\J -X^t)
=(-1)^{(m-1)/2}g_{A,B}(X)g_{B,A}(X^t),
\end{equation}
where
$$
g_{A,B}(X)=\underset {|I|=(m+1)/2,\ |J|=(m-1)/2}{\sum_{I,J\subseteq[n]}}
\det\bigl(\mathbf1 \, X_{I,J}\bigr) \det(A^IB^J),
$$
where $\mathbf 1$ denotes a column of length $(m+1)/2$ consisting
entirely of~$1$'s.
\end{theorem}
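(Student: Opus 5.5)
The plan is to recognise the matrix in \eqref{eq:1} as a skew-symmetric matrix plus a rank-one matrix, as the abstract suggests. Write $\mathbf u=B\mathbf 1_n$ and $\mathbf v=A\mathbf 1_n$, where $\mathbf 1_n$ is the all-ones column, so that $B\J A^t=\mathbf u\mathbf v^t$. Then
$$
AXB^t+B(\J-X^t)A^t=M+\mathbf u\mathbf v^t,\qquad M:=AXB^t-BX^tA^t,
$$
and $M$ is skew-symmetric of size $m\times m$.

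First I would prove a general factorisation lemma: for skew-symmetric $M$ and columns $\mathbf u,\mathbf v$,
$$
\det(M+\mathbf u\mathbf v^t)=
\begin{cases}
\Pf(M)\,\Pf(M+\mathbf u\mathbf v^t-\mathbf v\mathbf u^t), & m \text{ even},\\
\pm\,(\mathbf v^t\mathbf w)(\mathbf w^t\mathbf u), & m \text{ odd},
\end{cases}
$$
where $w_i=(-1)^{i}\Pf(M_{\hat i})$ and $M_{\hat i}$ is $M$ with row and column $i$ deleted. The starting point is the matrix determinant lemma $\det(M+\mathbf u\mathbf v^t)=\det M+\mathbf v^t\operatorname{adj}(M)\mathbf u$.
\begin{itemize}
\item For even $m$, $\det M=\Pf(M)^2$ and $\operatorname{adj}(M)=\Pf(M)P$, where $P$ is skew with entries $\pm\Pf(M_{\widehat{ij}})$. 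Expanding $\Pf(M+\mathbf u\mathbf v^t-\mathbf v\mathbf u^t)$ along the rank-two perturbation gives $\Pf(M)+\mathbf v^tP\mathbf u$, because the perturbation is a skew matrix of rank two, so only terms using it at most once survive.
\item For odd $m$, $\det M=0$ and $\operatorname{adj}(M)=\mathbf w\mathbf w^t$. Moreover $\mathbf v^t\mathbf w$ equals, up to sign, the Pfaffian of the bordered matrix $\begin{pmatrix}M&\mathbf v\\-\mathbf v^t&0\end{pmatrix}$.
\end{itemize}

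Second, I would evaluate these Pfaffians as minor sums. Set $C=(A\ B)$, an $m\times 2n$ matrix, and $S=\begin{pmatrix}0&X\\-X^t&0\end{pmatrix}$. Then $M=CSC^t$, so the Pfaffian minor summation formula of Ishikawa and Wakayama (Theorem~\ref{thm:IsWa}) gives $\Pf(M)=\sum_K\Pf(S_K)\det C^K$. Here $K$ runs over $m$-subsets of $[2n]$, and we write $K=I\cup(n+J)$. The Pfaffian $\Pf(S_K)$ vanishes unless $|I|=|J|=m/2$, in which case it equals $\pm\det X_{I,J}$. I expect the signs to combine exactly to $f_{A,B}(X)$.

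For the second factor, note that $\mathbf u\mathbf v^t-\mathbf v\mathbf u^t=B\J A^t-A\J B^t$. Hence
$$
M+\mathbf u\mathbf v^t-\mathbf v\mathbf u^t=B(\J-X^t)A^t-A(\J-X^t)^tB^t.
$$
This is the same type of matrix as $M$, with $(A,B,X)$ replaced by $(B,A,\J-X^t)$, so its Pfaffian is $f_{B,A}(\J-X^t)$. This proves \eqref{eq:1}.

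For odd $m$, I would apply the same minor summation to the bordered matrix. Augment $C$ by an extra row and column, as in the remark after Theorem~\ref{thm:Okada}, and border $S$ by the column $(\mathbf 1_n;0)$ coupling to $A$. This encodes $\mathbf v=A\mathbf 1_n$, and the surviving Pfaffians of $S_K$ are $\pm\det(\mathbf 1\ X_{I,J})$ with $|I|=(m+1)/2$ and $|J|=(m-1)/2$. This yields $g_{A,B}(X)$. The factor $\mathbf w^t\mathbf u$ is handled symmetrically and gives $g_{B,A}(\J-X^t)$. The last equality in \eqref{eq:1o} follows from column operations: subtracting multiples of the $\mathbf 1$ column turns $\det(\mathbf 1\ (\J-X^t)_{I,J})$ into $\det(\mathbf 1\ {-X^t_{I,J}})$, which equals $(-1)^{(m-1)/2}\det(\mathbf 1\ X^t_{I,J})$.

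The main obstacle will be sign bookkeeping. This includes the signs of the entries of $\operatorname{adj}(M)$ in terms of sub-Pfaffians, the sign of $\Pf(S_K)$ relative to $\det X_{I,J}$ after reordering $K$, and the bordering signs in the odd case. All of these must combine to give exactly $f$ and $g$ with no stray overall sign. I would fix the conventions by checking the cases $m=1$ and $m=2$ directly before doing the general computation.
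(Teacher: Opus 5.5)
Your proposal is correct, and it takes a genuinely different route from the paper's proof. The paper proves the rank-one factorisations (Propositions~\ref{prop:1} and~\ref{prop:2}) combinatorially, via Stembridge's cycle-cutting argument. It proves the Pfaffian expansions (Theorem~\ref{thm:main2} and Lemma~\ref{lem:2}) by multilinearity and reduction to $0/1$-matrices. In the even case it concludes only that $\Pf(AXB^t-BX^tA^t)$ and $\Pf\bigl(B(\J-X^t)A^t-A(\J-X)B^t\bigr)$ both divide the determinant. To finish, it needs the irreducibility of the Pfaffian (the appendix), a degree count, and a specialisation to fix the constant.

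Your observation that the cofactor term $\Pf(M)+\mathbf v^tP\mathbf u$ is itself $\Pf(M+\mathbf u\mathbf v^t-\mathbf v\mathbf u^t)$, a matrix of the same shape with $(B,A,\J-X^t)$, turns the even case into an exact identity. This removes irreducibility and unique factorisation altogether. In the odd case, your column operation gives both forms in \eqref{eq:1o} explicitly, whereas the paper only sketches the middle one.

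The price is that you import the classical adjugate formulas for skew-symmetric matrices (which is what Propositions~\ref{prop:1} and~\ref{prop:2} encode) and, more importantly, the Ishikawa--Wakayama formula. The paper deliberately avoids the latter (see the remark after Theorem~\ref{thm:main2}) so that Theorem~\ref{thm:IsWa} can be \emph{deduced} from Theorem~\ref{thm:main2} in Subsection~\ref{sec:IsWa}. On your route that deduction would be circular, so Theorem~\ref{thm:IsWa} must be taken from the literature.

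On signs: $\Pf(M)=(-1)^{\binom{m/2}2}f_{A,B}(X)$ rather than $f_{A,B}(X)$, and likewise for the second factor, so the signs cancel only in the product. Checking $m=1,2$ will not settle general $m$, but the general bookkeeping does close. Border $S$ by $(\mathbf 1_n;\mathbf 0;0)$ for $\mathbf v$ and by $(\mathbf 0;\mathbf 1_n;0)$ for $\mathbf u$. The two bordered Pfaffians then come out as $(-1)^{\binom{(m-1)/2}2}g_{A,B}(X)$ and $(-1)^{(m-1)/2+\binom{(m-1)/2}2}g_{B,A}(X^t)$. Since $\mathbf v^t\mathbf w$ and $\mathbf w^t\mathbf u$ are the negatives of these, the product is $(-1)^{(m-1)/2}g_{A,B}(X)g_{B,A}(X^t)$, as required.
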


As we are going to show, these factorisation formulae follow from
factorisations of the determinant of the sum of a skew-symmetric
matrix and a rank-1 matrix, together with the following Pfaffian
expansion formula, which is our second main result.

\begin{theorem} \label{thm:main2}
With the notation of Theorem~\ref{thm:main1}, for even~$m$ we have
\begin{equation} \label{eq:4} 
\Pf (AXB^t-BX^tA^t)
=(-1)^{\binom {m/2}2}f_{A,B}(X).
\end{equation}
\end{theorem}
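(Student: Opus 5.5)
Write $m=2k$. The matrix $M:=AXB^t-BX^tA^t$ is skew-symmetric, and it can be written as $M=C\,Y\,C^t$, where $C:=(A\;B)$ is the $m\times 2n$ concatenation and
$$
Y:=\begin{pmatrix}0&X\\-X^t&0\end{pmatrix}
$$
is a skew-symmetric $2n\times 2n$ matrix. Indeed, $CYC^t=AXB^t-BX^tA^t$.

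The plan is to apply the classical Pfaffian analogue of Cauchy--Binet for $\Pf(CYC^t)$ with $C$ of size $m\times N$, $N=2n$:
$$
\Pf(CYC^t)=\sum_{K\subseteq[N],\,|K|=m}\det(C^K)\,\Pf(Y_{K,K}).
$$
This is the minor-summation identity underlying Theorem~\ref{thm:IsWa}. If one does not want to cite it, it follows by expanding $\Pf(CYC^t)$ as a sum over perfect matchings, writing each entry as $\sum_{r,s}C_{i,r}Y_{r,s}C_{j,s}$, and observing that terms with repeated column indices cancel by antisymmetry.

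Next we identify the principal Pfaffians $\Pf(Y_{K,K})$. Write $K=I\cup(n+J)$ with $I,J\subseteq[n]$ and $|I|+|J|=2k$. The submatrix $Y_{K,K}$ has the block form $\begin{pmatrix}0&X_{I,J}\\-X_{I,J}^t&0\end{pmatrix}$. Its Pfaffian vanishes unless $|I|=|J|=k$, because the determinant of such a block matrix is zero when the off-diagonal block is not square. When $|I|=|J|=k$, the standard identity gives
$$
\Pf\begin{pmatrix}0&Z\\-Z^t&0\end{pmatrix}=(-1)^{\binom k2}\det Z,
$$
here with $Z=X_{I,J}$. Also $C^K=A^IB^J$, because the columns of $C$ in $K$, listed in increasing order, are exactly the columns $A^I$ followed by $B^J$. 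Summing over all such $K$ then yields $(-1)^{\binom{k}{2}}f_{A,B}(X)$, which is \eqref{eq:4}.

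The main obstacle is the sign bookkeeping, and it has two parts. First, we must confirm the sign $(-1)^{\binom k2}$ in the block Pfaffian. This can be checked by induction on $k$: expand along the first row, or reorder the rows/columns into the interleaved order $1,k+1,2,k+2,\dots$. That reordering turns the matrix into a direct sum of $2\times2$ blocks when $Z$ is diagonal, and the permutation has sign $(-1)^{\binom k2}$. Multilinearity in the columns of $Z$ then extends the formula to general $Z$. Second, we must make sure that the ordering convention for $K$ matches the concatenation $A^IB^J$ with no extra sign. It does, since every index of $I$ is smaller than every index of $n+J$. Everything else is routine.
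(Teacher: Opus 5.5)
Your proof is correct, but it follows the route that the paper mentions in the remark after Theorem~\ref{thm:main2} and deliberately avoids. You factor $AXB^t-BX^tA^t=C\,Y\,C^t$ with $C=(A\;B)$ and apply the Ishikawa--Wakayama minor summation formula, which reduces everything to the principal Pfaffians $\Pf(Y_{K,K})$ of the block matrix.

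The paper instead uses multilinearity of both sides in the rows of $(A\mid B)$ to reduce to $0$--$1$ matrices with a single $1$ per row. It normalises the row order using $\Pf(PZP^t)=\det P\cdot\Pf(Z)$, and then evaluates the single remaining block Pfaffian.

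Your version is shorter and shows the theorem as a corollary of the minor summation formula. The paper avoids this because Subsection~\ref{sec:IsWa} derives Theorem~\ref{thm:IsWa} from the case $A=B$ of Theorem~\ref{thm:main2}, so citing it would be circular. Your self-contained sketch removes that objection. Expanding $\Pf(CYC^t)$ over matchings and column assignments $\phi\colon[m]\to[2n]$ gives $\sum_\phi\prod_i C_{i,\phi(i)}\,\Pf\bigl(Y_{\phi(i),\phi(j)}\bigr)_{i,j}$. The non-injective $\phi$ contribute zero, since that skew-symmetric matrix has two equal rows and columns. The injective ones regroup into $\det(C^K)\Pf(Y_{K,K})$. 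This is the same mechanism as the paper's multilinearity reduction, just carried out for general $Y$.

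One small repair is needed in your check of $\Pf\begin{pmatrix}0&Z\\-Z^t&0\end{pmatrix}=(-1)^{\binom k2}\det Z$. Multilinearity in the columns of $Z$ alone does not pass from diagonal $Z$ to general $Z$. You also need the left side to be alternating in the columns of $Z$, which holds because two equal columns make the matrix singular. It is then a multiple of $\det Z$, and the multiple is fixed by $Z=\Id_k$. The paper itself states this evaluation without proof.
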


\begin{remark}
The specialisation $X=\Id _n$, that is,
$$
\Pf (AB^t-BA^t)
=(-1)^{\binom {m/2}2}f_{A,B}(\Id _n)
=(-1)^{\binom {m/2}2}
\underset{ |I|=m/2}{\sum_{I\subseteq[n]}}
\det(A^IB^I),
$$
can be considered as a Pfaffian analogue of the Cauchy--Binet
theorem. Okada has also given Pfaffian analogues of the Cauchy--Binet theorem,
see~\cite[Sec.~3]{OkadAB},
%$$
%\text{\tt https://www.mat.univie.ac.at/\~{}slc/wpapers/s76vortrag/okada.pdf},
%$$
but they are completely unrelated.
\end{remark}

\begin{remark}
By observing that
$$
AXB^t-BX^tA^t=(A\,B)\begin{pmatrix} 
O&X\\-X^t&O
\end{pmatrix}
(A\,B)^t,
$$
where $(A\,B)$ denotes the concatenation of the matrices~$A$ and~$B$,
one could prove Theorem~\ref{thm:main2} using the Pfaffian minor
summation formula of Ishikawa and Wakayama 
in Theorem~\ref{thm:IsWa}. However, we will
provide an independent proof in Section~\ref{sec:aux} since
we will argue in Subsection~\ref{sec:IsWa} that the special case
of Theorem~\ref{thm:main2} in which $A=B$
{\it implies} the Pfaffian minor summation formula.
In that sense, although equivalent,
Theorem~\ref{thm:main2} may be considered as a
non-symmetric extension of that formula.
\end{remark}

\medskip
Our paper is organised as follows. 
In the next section, we prove factorisation formulae for the
determinant of the sum of a skew-symmetric matrix and a rank-1 matrix,
see Propositions~\ref{prop:1} and~\ref{prop:2}. In
Section~\ref{sec:aux}, we provide the proof of
Theorem~\ref{thm:main2}, and we prove a related auxiliary result
that we need later for the proof of the odd case of
Theorem~\ref{thm:main1}. By combining the results from
Sections~\ref{sec:Y+M} and~\ref{sec:aux}, we are then in the position
to provide the proof of Theorem~\ref{thm:main1}, which we do in
Section~\ref{sec:main}. Several applications of our formulae are given
in the final section, Section~\ref{sec:appl}.
Subsection~\ref{sec:sym} contains the explicit statements of some
interesting special cases of the factorisations proved in
Section~\ref{sec:Y+M}. In Subsection~\ref{sec:IsWa} we show that
Theorem~\ref{thm:main2} implies the Pfaffian minor summation formula
of Ishikawa and Wakayama. Subsection~\ref{sec:AB} presents an asymmetric
extension of Byun's minor summation formula in
Theorem~\ref{thm:Byun} and of Okada's minor summation formula
in Theorem~\ref{thm:Okada}. 
Finally, in Subsection~\ref{sec:Cauchy} we
derive a Cauchy-type identity for skew Schur functions.

For the sake of completeness, we provide a proof of the fact that the
Pfaffian is an irreducible polynomial in the appendix. (We use this in
the proof of Theorem~\ref{thm:main1}.) This may well be hidden
somewhere in the literature, but we were not able to find an explicit
place where a proof is given.

\section{The determinant of a skew-symmetric plus a rank-$1$ matrix}
\label{sec:Y+M}

In this section, we prove two factorisations of the determinant of
the sum of a skew-symmetric matrix and a rank-1 matrix, see
Propositions~\ref{prop:1} and~\ref{prop:2} below. They will
be instrumental in the proof of Theorem~\ref{thm:main1} given in
Section~\ref{sec:main}. Moreover, as we believe, they are interesting
in their own right.

For the proofs of Propositions~\ref{prop:1} and~\ref{prop:2}, 
we heavily rely on Stembridge's combinatorial proof 
\cite[Proof of Prop.~2.2]{StemAE} of the classical fact that
the determinant of a skew-symmetric matrix is equal to the
square of the Pfaffian of the matrix. For the convenience of
the reader, we recall that proof here.

To begin with, given a skew-symmetric matrix $Y$ of size~$m$, where
$m$~is even, its Pfaffian may be defined by
$$
\Pf(Y)=\sum_{\mu\text{ a perfect matching of }[m]}
\sgn\mu\cdot
\underset{\{i,j\}\in\mu}{\prod _{1\le i<j\le m} ^{}}
Y_{i,j},
$$
where the sign $\sgn\mu$ of the perfect matching is defined
as $(-1)^{\text{cr}(\mu)}$, with $\text{cr}(\mu)$ the number
of crossings in the pictorial representation of~$\mu$; alternatively,
$\text{cr}(\mu)$ is the number of quadruples $(i,j,k,l)$ with
$i<j<k<l$ such that $\{i,k\}$ and $\{j,l\}$ are pairs in the
matching~$\mu$. 

\begin{proposition} \label{prop:0}
Let $Y$ be a skew-symmetric matrix of size~$m$, where $m$~is even.
Then
\begin{equation} \label{eq:0} 
\det(Y)=\Pf^2(Y).
\end{equation}
\end{proposition}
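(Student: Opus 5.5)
We follow Stembridge's combinatorial argument: expand $\det(Y)$ over permutations, cancel the permutations having an odd cycle, and match what remains with ordered pairs of perfect matchings.

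Write
$$
\det(Y)=\sum_{\si\in S_m}\sgn\si\prod_{i=1}^m Y_{i,\si(i)},
$$
and represent each $\si$ by its functional digraph, a disjoint union of directed cycles on $[m]$. Since $Y_{i,i}=0$, permutations with a fixed point contribute nothing.

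\emph{Step 1: cancel odd cycles.} Let $\si$ have at least one cycle of odd length. Among its odd cycles, choose the one containing the smallest element, and reverse its orientation; call the result $\si'$. The map $\si\mapsto\si'$ is an involution, because the chosen cycle is the same for $\si'$, and $\si'\neq\si$, because an odd cycle of length at least $3$ is not equal to its reverse. Reversal preserves the cycle type, so $\sgn\si'=\sgn\si$. The weight changes by $\prod Y_{\si(i),i}/\prod Y_{i,\si(i)}$ over the odd number of reversed arcs, and by skew-symmetry this ratio is $(-1)^{\text{odd}}=-1$. Hence these terms cancel in pairs.

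\emph{Step 2: bijection for even cycles.} Every surviving $\si$ has only even cycles, and we map it to an ordered pair $(\mu_1,\mu_2)$ of perfect matchings. In each cycle, start at its smallest element $c$ and write the cycle as $c=c_1\to c_2\to\cdots\to c_{2k}\to c_1$. Put the arcs $\{c_1,c_2\},\{c_3,c_4\},\dots$ into $\mu_1$ and the arcs $\{c_2,c_3\},\dots,\{c_{2k},c_1\}$ into $\mu_2$. The union $\mu_1\cup\mu_2$ recovers the cycles as undirected even cycles, and choosing $\mu_1$ at the minimum fixes the orientation. So the map is a bijection onto all ordered pairs $(\mu_1,\mu_2)$. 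A double edge $\mu_1\cap\mu_2$ corresponds to a $2$-cycle, which fits the same rule.

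\emph{Step 3: sign check (main obstacle).} We must show
$$
\sgn\si\prod_i Y_{i,\si(i)}=\sgn\mu_1\,\sgn\mu_2\prod_{\{i<j\}\in\mu_1}Y_{i,j}\prod_{\{i<j\}\in\mu_2}Y_{i,j}.
$$
Equivalently, we must show that $\sgn\si$ times $(-1)^{\#\{\text{arcs of }\si\text{ going downward}\}}$ equals $\sgn\mu_1\sgn\mu_2$.

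We would handle this with a local-move argument. First, verify the identity for a base configuration: $\mu_1=\mu_2=\{\{1,2\},\{3,4\},\dots\}$, where $\si$ is a product of transpositions $(2i-1\ 2i)$ and both sides equal $(-1)^{m/2}$ from the downward arcs times the signs. Then observe that conjugating everything by an adjacent transposition $s=(k\ k{+}1)$ changes each side by the same factor. On the permutation side, $\sgn$ is invariant under conjugation, and the number of downward arcs changes only through the arcs between $k$ and $k+1$. On the matching side, swapping two adjacent labels toggles crossings exactly among the pairs containing $k$ and $k+1$ in each matching. The delicate point is that the relabelled cycle might start at a different minimum, which would swap the roles of $\mu_1$ and $\mu_2$ on that cycle. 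This only matters when $\{k,k+1\}$ involves the minimum of a cycle, and it can be checked directly.

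An alternative that avoids the case analysis is to identify $\sgn\mu$ with the sign of the permutation $i_1j_1i_2j_2\cdots$ (pairs $i_r<j_r$, listed in any order). Then $\sgn\mu_1\sgn\mu_2$ is the sign of the permutation sending word 1 to word 2. One checks cycle by cycle that this equals $\sgn\si$ times $(-1)^{\text{downward arcs}}$ by choosing the words adapted to the cycles. I would try this second route first.

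Summing over all ordered pairs then gives $\Pf(Y)^2$, which proves \eqref{eq:0}.
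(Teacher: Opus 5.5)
Your proposal follows the paper's proof. It uses the same involution, reversing the odd cycle through the smallest element, and the same cutting of each even cycle at its minimum into an ordered pair $(\mu_1,\mu_2)$. The paper simply cites Stembridge for the sign relation, whereas you prove it yourself, and your route (b) does this correctly. Writing each pair in the orientation of $\sigma$ costs exactly $(-1)^{\#\text{downward arcs}}$. If the blocks $c_1c_2\cdots c_{2k}$ and $c_2c_3\cdots c_{2k}c_1$ of each cycle occupy the same positions in the two words, the letterwise map from the first word to the second is $\sigma$ itself. Your version of the identity, with the factor $(-1)^{\#\text{downward arcs}}$, is also the precise one. The paper's shorthand $\sgn\sigma=\sgn\mu_1\cdot\sgn\mu_2$ leaves this factor implicit, and read literally it already fails for $\sigma=(1\,2)$.

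Route (a), by contrast, does not work as stated. Conjugation by adjacent transpositions preserves the cycle type of $\sigma$, so from the single base case $(1\,2)(3\,4)\cdots$ you only reach fixed-point-free involutions, i.e.\ pairs with $\mu_1=\mu_2$. You can repair it by taking one base case per even cycle type, say consecutive blocks $(1\,2\,\cdots\,2k_1)(2k_1{+}1\,\cdots)\cdots$, where both sides equal $1$. The role-swapping issue you flag is then harmless. The parity of the number of crossings between a matching on $V$ and a matching on a disjoint set $W$ depends only on $V$ and $W$, so swapping the roles of $\mu_1$ and $\mu_2$ on one cycle does not change $\sgn\mu_1\cdot\sgn\mu_2$.
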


\begin{proof}
By definition, the determinant $\det(Y)$ equals
\begin{equation} \label{eq:detdef} 
\sum_{\si\in S_m}\sgn\si\cdot
\prod _{i=1} ^{m}Y_{i,\si(i)},
\end{equation}
where $S_m$ denotes the set of permutations of $[m]$.

Let $\si$ be a permutation that contains an odd-length cycle
in its disjoint cycle decomposition.
Among the odd-length cycles of~$\si$, 
let $c=(a_1,a_2,\dots,a_k)$ be the one that contains the smallest
element. Then form another permutation~$\si'$ by reverting the
direction of the cycle~$c$, that is, by starting with~$\si$
and subsequently replacing~$c$ by $(a_k,a_{k-1},\dots,a_1)$.
Clearly, we have $\sgn\si=\sgn\si'$. On the other hand, we have
$$
\prod _{i=1} ^{m}Y_{i,\si(i)}
=-\prod _{i=1} ^{m}Y_{i,\si'(i)}
$$
since $Y$ is skew-symmetric, and hence the reversion of the
cycle~$c$ causes a sign of $(-1)^k=-1$. As a consequence, the
contributions of~$\si$ and~$\si'$ cancel in the
sum~\eqref{eq:detdef}. Thus, only permutations~$\si$ in which all
cycles in their disjoint cycle decompositions are even contribute
to~\eqref{eq:detdef}.

Given such a permutation~$\si$ with all cycles of even length,
we may now construct\break a pair $(\mu_1,\mu_2)$ of perfect matchings
of~$[m]$ in the following way: for each cycle\break $(a_1,a_2,\dots,a_k)$
with $k$ even and $a_1$ the smallest element in the cycle, let
$\mu_1$ contain the pairings $\{a_1,a_2\}$, $\{a_3,a_4\}$, \dots,
$\{a_{k-1},a_k\}$, and let
$\mu_2$ contain the pairings $\{a_2,a_3\}$, $\{a_4,a_5\}$, \dots,
$\{a_{k},a_1\}$.
This will immediately explain the equality in~\eqref{eq:0}
once we show that $\sgn\si=(\sgn\mu_1)\cdot(\sgn\mu_2)$.
This is done in the proof of \cite[Prop.~2.2]{StemAE}, and it is
essentially afforded by \cite[Lemma~2.1]{StemAE}.
\end{proof}

\begin{proposition} \label{prop:1}
Let $Y$ be a skew-symmetric matrix of size~$m$ and
$M=(a_ib_j)_{1\le i,j\le m}$ a rank-$1$ matrix of the same size.
If $m$ is even then 
\begin{equation} \label{eq:2} 
\det(Y+M)=\Pf(Y)\Bigg(\Pf(Y)
+\sum_{1\le i< j\le m}(-1)^{i+j-1}(a_ib_j-a_jb_i)
\Pf(Y(i,j))\Bigg),
\end{equation}
where $Y(i,j)$ arises from $Y$ by deleting the $i$-th and $j$-th
rows and columns.
\end{proposition}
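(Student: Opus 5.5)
The plan is to reduce the statement to a rank-one perturbation formula for determinants, so that no new combinatorics is needed.

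\textbf{Step 1: expand along the rank-one part.} Since $M=ab^t$ has rank one, the matrix determinant lemma (equivalently, multilinearity in the columns, where at most one column from $M$ survives) gives
$$
\det(Y+M)=\det Y+\sum_{i,j=1}^m (-1)^{i+j} a_i b_j \det Y^{(i,j)} ,
$$
with $Y^{(i,j)}$ obtained from $Y$ by deleting row~$i$ and column~$j$. In other words, $\det(Y+M)=\det Y+b^t\operatorname{adj}(Y)\,a$.

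\textbf{Step 2: the adjugate of an even skew-symmetric matrix.} This is the key input. For $m$ even and $Y$ skew-symmetric, I claim
$$
(-1)^{i+j}\det Y^{(i,j)}=\Pf(Y)\cdot \epsilon_{ij}\Pf(Y(i,j)),
$$
where $\epsilon_{ij}=(-1)^{i+j-1}$ for $i<j$, $\epsilon_{ji}=-\epsilon_{ij}$, and the right-hand side is $0$ for $i=j$.

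To prove it, I would treat the entries $Y_{k,l}$ with $k<l$ as indeterminates and use $\det Y=\Pf(Y)^2$ from Proposition~\ref{prop:0}. Differentiating with respect to $Y_{i,j}$ (with $i<j$) affects both entries $(i,j)$ and $(j,i)=-Y_{i,j}$. On the determinant side this gives the two cofactor terms
$$
(-1)^{i+j}\det Y^{(i,j)}-(-1)^{i+j}\det Y^{(j,i)} .
$$
Because $Y^t=-Y$ and $m-1$ is odd, $\det Y^{(j,i)}=-\det Y^{(i,j)}$, so the derivative equals $2(-1)^{i+j}\det Y^{(i,j)}$. On the Pfaffian side it equals $2\Pf(Y)\,\partial\Pf(Y)/\partial Y_{i,j}$, and the Pfaffian expansion along a pair gives $\partial\Pf(Y)/\partial Y_{i,j}=(-1)^{i+j-1}\Pf(Y(i,j))$. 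Comparing the two sides yields the claim. Polynomial identities in indeterminates then specialise to arbitrary entries.

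\textbf{Step 3: assemble.} Substituting Step 2 into Step 1 and pairing the terms $(i,j)$ and $(j,i)$ gives
$$
\det(Y+M)=\Pf(Y)^2+\Pf(Y)\sum_{i<j}(-1)^{i+j-1}(a_ib_j-a_jb_i)\Pf(Y(i,j)),
$$
which is \eqref{eq:2}.

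\textbf{Main obstacle.} The hard part is getting the signs in Step 2 right. First, the Pfaffian expansion sign $(-1)^{i+j-1}$ must be checked against the crossing-number definition used in this paper. Removing the pair $\{i,j\}$ changes the crossing count by the number of elements between $i$ and $j$ that are matched outside the interval $(i,j)$. This number has the same parity as $j-i-1$, since the elements strictly between $i$ and $j$ that are matched to each other come in pairs. That gives the sign $(-1)^{j-i-1}$. Second, the overall sign of the formula should be confirmed on the case $m=2$: the left side is $(a_1b_1)(a_2b_2)-(Y_{12}+a_1b_2)(-Y_{12}+a_2b_1)$, which expands to $Y_{12}^2+Y_{12}(a_2b_1-a_1b_2)$. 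This agrees with \eqref{eq:2}, since the $(1,2)$ term carries the sign $(-1)^{1+2-1}=+1$ times $a_1b_2-a_2b_1$ and $\Pf$ of the empty matrix is $1$.

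A combinatorial alternative is to adapt Stembridge's cycle-pairing from the proof of Proposition~\ref{prop:0}, allowing one marked edge from $M$; I would use it only if the derivative argument runs into trouble.
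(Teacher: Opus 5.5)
Your argument is correct, but only its first step matches the paper. Both proofs begin by reducing $\det(Y+M)$ to $\det Y+\sum_{i,j}a_ib_jC_{ij}$, where $C_{ij}=(-1)^{i+j}\det Y^{(i,j)}$ is the $(i,j)$ cofactor of $Y$. The paper gets there by multilinearity in the rows and expansion along the replaced row; you get there by the matrix determinant lemma.

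The routes part when $C_{ij}$ is identified. The paper works combinatorially. It reruns Stembridge's sign-reversing involution on permutations with $\sigma(i)=j$ and observes that only permutations with all cycles of even length survive. It then cuts each survivor into a perfect matching of $[m]$ and a perfect matching containing the arc $\{i,j\}$, reading off the sign $(-1)^{i+j-1}$ from the crossings of that arc.

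You instead obtain $C_{ij}=(-1)^{i+j-1}\Pf(Y)\Pf(Y(i,j))$ for $i<j$ algebraically. You differentiate the identity $\det Y=\Pf(Y)^2$ of Proposition~\ref{prop:0} with respect to the generic entry $y_{ij}$ and use $C_{ji}=-C_{ij}$. You then cancel the factor $2$ in the torsion-free ring $\mathbb Z[y_{kl}]$ before specialising.

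Your route is shorter, uses Proposition~\ref{prop:0} purely as a black box, and makes every sign explicit. The only combinatorial input is the parity of the crossings of one arc. It does not require checking that the cycle-cutting stays sign-compatible once the factor $Y_{i,j}$ is replaced by $a_ib_j$.

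The paper's route buys uniformity. The same involution-and-cutting scheme also proves the odd case, Proposition~\ref{prop:2}. There $\det Y\equiv 0$, so your differentiation trick gives nothing directly, and a separate argument would be needed.

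Two small points. First, for $i=j$ you should state explicitly that $C_{ii}=0$, because $Y^{(i,i)}$ is skew-symmetric of odd order. Second, in your $m=2$ check the expansion is $Y_{12}^2+Y_{12}(a_1b_2-a_2b_1)$, not $Y_{12}^2+Y_{12}(a_2b_1-a_1b_2)$. As written, your check would contradict \eqref{eq:2} rather than confirm it. This is only an arithmetic slip and does not affect your main sign derivation, which is independent of it.
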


\begin{proof}
We use multilinearity of the determinant in the rows to expand
$\det(Y+M)$. Since $M$ is a rank-1 matrix, the result is
$$
\det(Y+M)=\det(Y)+\sum_{i=1}^{m}\det(Y\leftarrow M_i),
$$
where $Y\leftarrow M_i$ symbolises the matrix that results from~$Y$ by
replacing 
the $i$-th row by the $i$-th row of~$M$. For example, we have
$$
(Y\leftarrow M_2)=
\det\begin{pmatrix}
0&Y_{1,2}&Y_{1,3}&\dots&Y_{1,m}\\
a_2b_1&a_2b_2&a_2b_3&\dots&a_2b_{m}\\
-Y_{1,3}&-Y_{2,3}&0&\dots&Y_{3,m}\\
\vdots&\vdots&\vdots&\ddots&\vdots\\
-Y_{1,m}&-Y_{2,m}&-Y_{3,m}&\dots&0
\end{pmatrix}.
$$
Now we expand each of the determinants $\det(Y\leftarrow M_i)$
along the special row, that is, the $i$-th row.
Since, in this expansion, the term $a_ib_i$ gets multiplied
by the determinant of a skew-symmetric matrix of odd size,
the corresponding term vanishes. Otherwise, we adapt the preceding proof
of Proposition~\ref{prop:0} for our purposes. Let us say
that we want to determine the cofactor of~$a_ib_j$ with $i\ne j$. 
The arguments from above can be used almost verbatim. 
The only difference is that here we must consider permutations
which map $i$ to~$j$. The involution from the previous proof cancels all
contributions from permutations that contain an odd-length cycle not
containing $i$ and~$j$. Since $m$~is even, 
if $i$ and~$j$ are contained in an odd-length cycle,
then there must be another odd-length cycle. Consequently,
contributions from such permutations are also cancelled by the
involution. In summary, only permutations survive 
which map $i$ to~$j$, and in which all cycles have even length.

Let $\si$ be such a permutation. Let
$(i,j,a_3,\dots,a_k)$, with $k$~even, be the cycle in the disjoint
cycle decomposition of~$\si$ that maps $i$ to~$j$.
From the previous proof, we know that the permutation~$\si$
can be mapped to a pair $(\mu_1,\mu_2)$ of perfect matchings
with the property that
\begin{equation} \label{eq:sgn} 
\sgn\si=(\sgn\mu_1)\cdot(\sgn\mu_2)
\end{equation} 
by --- essentially --- ``cutting" each cycle of~$\si$ into
two matchings. One of~$\mu_1$ and~$\mu_2$ will contain the
pairings $\{i,j\}$, $\{a_3,a_4\}$, \dots, $\{a_{k-1},a_k\}$,
while the other contains the pairings $\{a_k,i\}$, $\{j,a_3\}$, \dots.
Without loss of generality, let $\mu_2$ contain $\{i,j\}$.
Then $\mu_1$ contributes to $\Pf(Y)$ on the right-hand side
of~\eqref{eq:2}, while $\mu_2\backslash\{\{i,j\}\}$ contributes
to~$\Pf(Y(i,j))$. Since we know that~\eqref{eq:sgn} holds, it remains
to determine the effect of removing $\{i,j\}$ from~$\mu_2$.
Indeed, the number of crossings of~$\mu_2$ in which $\{i,j\}$
is involved is congruent to $i-j-1$ modulo~2. Thus, if $i<j$
there is a sign of $(-1)^{i+j-1}$ which has to be recorded,
while if $i>j$ it is $-(-1)^{i+j-1}$ (due to lack of skew-symmetry
of $a_ib_j$ versus $a_jb_i$).

This explains the claimed result.
\end{proof}

%\begin{remark}
%\cite{BrillAA}
%\end{remark}

\begin{proposition} \label{prop:2}
Let $Y$ be a skew-symmetric matrix of size~$m$ and
$M=(a_ib_j)_{1\le i,j\le m}$ a rank-$1$ matrix of the same size.
If $m$ is odd then 
\begin{equation} \label{eq:5} 
\det(Y+M)=\Bigg(
\sum_{i=1}^m(-1)^{i-1}a_i
\Pf(Y(i))\Bigg)\Bigg(
\sum_{j=1}^m(-1)^{j-1}b_j
\Pf(Y(j))\Bigg),
\end{equation}
where $Y(i)$ arises from $Y$ by deleting the $i$-th 
row and column.
\end{proposition}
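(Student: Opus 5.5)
Since $m$ is odd, $\det(Y)=0$, and the rank-$1$ perturbation enters linearly. I would start as in the proof of Proposition~\ref{prop:1}, using multilinearity of the determinant in the rows:
$$
\det(Y+M)=\sum_{i=1}^m\det(Y\leftarrow M_i).
$$
Expanding $\det(Y\leftarrow M_i)$ along the $i$-th row gives a sum of terms $a_ib_j$ times the $(i,j)$ cofactor $C_{i,j}$ of~$Y$. Hence
$$
\det(Y+M)=\sum_{i,j}a_ib_jC_{i,j}.
$$
So the claim is equivalent to the classical factorisation of the adjugate of an odd skew-symmetric matrix:
$$
C_{i,j}=(-1)^{i+j}\Pf(Y(i))\Pf(Y(j)).
$$

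To prove this in the paper's style, I would adapt the involution from the proof of Proposition~\ref{prop:0}. The cofactor $C_{i,j}$ is the signed sum over permutations $\si$ with $\si(j)=i$ of the products $\prod_{k\ne j}Y_{k,\si(k)}$. Equivalently, it is the sum over permutations with the factor $Y_{j,i}$ replaced by~$1$, i.e.\ over cycle structures in which one cycle carries a marked ``broken'' edge $j\to i$.

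First, odd cycles not containing the marked edge are reversed; by skew-symmetry these contributions cancel. Since $m$ is odd, exactly one cycle has odd length, namely the one containing the marked edge. That cycle has the form $(j,i,a_3,\dots,a_k)$ with $k$ odd; for $i=j$ it is the fixed point~$j$.

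Next, I cut the permutation into two matchings. Every even cycle splits into two matchings as before. The odd marked cycle becomes a path $i,a_3,\dots,a_k,j$ with an even number of edges, and its edges alternate: one class gives a matching of the path vertices other than~$j$, the other a matching of those other than~$i$. Together with the even cycles, this yields a pair $(\mu_1,\mu_2)$ with $\mu_1$ a perfect matching of $[m]\setminus\{j\}$ and $\mu_2$ a perfect matching of $[m]\setminus\{i\}$. I would check that this map is a bijection, with the inverse given by superimposing the two matchings, exactly as in Stembridge's argument.

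The main obstacle is the sign identity
$$
\sgn\si=(-1)^{i+j}\sgn\mu_1\sgn\mu_2,
$$
up to the orientation convention on skew entries. I would reduce it to Proposition~\ref{prop:0}, or to the mechanism of Proposition~\ref{prop:1}. Adjoin a new vertex $m+1$ and close the path through it, i.e.\ add edges $j\to m+1\to i$. This gives an even permutation structure on $[m+1]$, to which Stembridge's lemma applies with matchings $\mu_1\cup\{j,m+1\}$ and $\mu_2\cup\{i,m+1\}$. Removing the pair $\{j,m+1\}$ changes the crossing number by a quantity congruent to $m-j$, since the new vertex is last, and similarly removing $\{i,m+1\}$ contributes $m-i$. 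The extra cycle length changes the permutation sign by a fixed amount. Collecting these parities should give $(-1)^{i+j}$.

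An alternative cleaner route for the sign is to treat $Y\oplus$ a bordered matrix: apply Proposition~\ref{prop:1}-type reasoning to the $(m+1)\times(m+1)$ skew matrix bordered by the vector~$a$, using the known expansion $\Pf\begin{pmatrix}Y&a\\-a^t&0\end{pmatrix}=\sum_i(-1)^{i-1}a_i\Pf(Y(i))$. With the adjugate factorisation established, substituting into $\sum_{i,j}a_ib_jC_{i,j}$ gives the product \eqref{eq:5} immediately.
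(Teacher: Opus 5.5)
Your proposal is correct and follows the paper's proof: the same row-multilinearity reduction to the cofactor of $a_ib_j$, the same cancellation of odd cycles by the involution from Proposition~\ref{prop:0} (leaving only the one odd cycle through the marked edge), and the same cutting of that cycle into perfect matchings of $[m]\setminus\{i\}$ and $[m]\setminus\{j\}$. The only variation is in the sign bookkeeping: the paper shortens the odd cycle by deleting $j$ and relabelling $j$ as $i$, whereas you lengthen it through a new last vertex $m+1$. Your version makes the crossing parities $m-j$ and $m-i$ immediate, but when collecting signs you must explicitly count the orientation factor $\tilde Y_{m+1,i}=-\tilde Y_{i,m+1}$ of the edge $m+1\to i$. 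That factor cancels the $-1$ coming from $\sgn\tilde\si=-\sgn\si$; without it you would get $-(-1)^{i+j}$ instead of $(-1)^{i+j}$.
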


\begin{proof}
We proceed as in the proof of Proposition~\ref{prop:1}. That is,
we start by using multilinearity of the determinant in the rows to expand
$\det(Y+M)$. Here we obtain
$$
\det(Y+M)=\sum_{i=1}^{m}\det(Y\leftarrow M_i),
$$
where $Y\leftarrow M_i$ has the same meaning as before.
Now we expand each of the determinants $\det(Y\leftarrow M_i)$
along the special row, that is, the $i$-th row.
Let us say
that we want to determine the cofactor of~$a_ib_j$. 
Again, we must consider permutations
which map $i$ to~$j$. The involution from the proof of
Proposition~\ref{prop:0} cancels all
contributions from permutations that contain an odd-length cycle not
containing $i$ and~$j$. With $m$ being odd, we see that
only contributions from permutations survive 
which map $i$ to~$j$, in which $i$ and~$j$ are contained in an
odd-length cycle, and in which all other cycles have even length.

Let $\si$ be such a permutation. Let
$(j,a_3,\dots,a_k,i)$, with $k$~odd, be the cycle in the disjoint
cycle decomposition of~$\si$ that maps $i$ to~$j$.
Inspired by the proof of Proposition~\ref{prop:0}, we map $\si$
to the pair of matchings $(\mu_1,\mu_2)$, where for each even-length
cycle we do the decomposition as in that proof, and where we
stipulate that the pairings $\{j,a_3\}$, $\{a_4,a_5\}$, \dots,
$\{a_{k-1},a_k\}$ are put into~$\mu_1$, and
the pairings $\{a_3,a_4\}$, \dots, $\{a_{k-2},a_{k-1}\}$, 
$\{a_{k},i\}$ are put into~$\mu_2$. Clearly, the matching~$\mu_1$
is a perfect matching on $[m]\backslash\{i\}$, while $\mu_2$
is a perfect matching on $[m]\backslash\{j\}$.

Instead of $(\si,\mu_1,\mu_2)$, let us for the moment consider
$(\si',\mu'_1,\mu_2)$, where $\si'$ arises from~$\si$ by
replacing the cycle $(j,a_3,\dots,a_k,i)$ by $(a_3,\dots,a_k,i)$,
and where $\mu_1'$ arises from $\mu_1$ by replacing the element~$j$
by~$i$. Then the matchings~$\mu_1'$ and~$\mu_2$ arise from the
permutation~$\si'$ by a decomposition process as
in the proof of Proposition~\ref{prop:0}, respectively by
a slight modification
thereof (when decomposing the special cycle $(a_3,\dots,a_k,i)$ we followed
a different rule). In any case, 
the permutation $\si'$ --- which is a permutation on
$[m]\backslash\{j\}$ --- and the pair of matchings~$(\mu_1',\mu_2)$
--- the latter being perfect matchings on $[m]\backslash\{j\}$,
still satisfy the sign relation~\eqref{eq:sgn}, that is,
\begin{equation*} %\label{eq:sgn2} 
\sgn\si'=(\sgn\mu_1')\cdot(\sgn\mu_2).
\end{equation*} 
As is easy to see, we have $\sgn\si'=-\sgn\si$ and
$\sgn\mu_1'=(-1)^{i-j-1}\sgn\mu_1$. Thus, the above sign relation
implies
\begin{equation*} %\label{eq:sgn2} 
\sgn\si=(-1)^{i+j}(\sgn\mu_1)\cdot(\sgn\mu_2).
\end{equation*} 
Clearly, the matching $\mu_1$ contributes to $\Pf(Y(i))$
on the right-hand side
of~\eqref{eq:5}, while $\mu_2$ contributes
to~$\Pf(Y(j))$. 

This explains the claimed result.
\end{proof}

\section{Proof of Theorem~\ref{thm:main2}, and an auxiliary result} 
\label{sec:aux}

\begin{proof}[Proof of Theorem~\ref{thm:main2}]
The Pfaffian is multilinear in the rows of $\bigl(A\mid B\bigr)$.
The same applies to $f_{A,B}(X)$. Hence it suffices to check
the identity for matrices $\bigl(A\mid B\bigr)$ that contain at
most one~1 in each row. If $\bigl(A\mid B\bigr)$ contains a row of 0's then
both sides of~\eqref{eq:4} vanish, and the same is true if two
rows of $\bigl(A\mid B\bigr)$ are the same.
On the other hand, if $\bigl(A\mid B\bigr)$
has exactly one~1 in each row, then we may reorder the rows so that
the left-most~1 is in the top row, the left-most~1 of the remaining
rows is in the second row, etc. The reordering of the rows causes
the same sign on both sides of~\eqref{eq:4}, due to 
\cite[Prop.~2.3(b)]{StemAE}. 
In summary, we may restrict our attention to
$$
\bigl(A\mid B\bigr)=\begin{pmatrix}
\mathcal I^R&O\\
O&\mathcal I^S\end{pmatrix},
$$
where $R,S\subseteq[n]$ and $|R|+|S|=m$, and
given $R=\{r_1,r_2,\dots\}$ with $r_1<r_2<\cdots$ the matrix $\mathcal
I^R$ is
an $|R|\times n$ matrix consisting of 0's and 1's with the unique~1 in row~$i$
located in column~$r_i$. On the left-hand side of~\eqref{eq:4}, we obtain
$$
\Pf\begin{pmatrix} O&X_{R,S}\\-X_{R,S}^t&O\end{pmatrix}
=\begin{cases}
(-1)^{\binom {m/2}2}\det(X_{R,S}),&\text{if }|R|=|S|=m/2,\\
0,&\text{otherwise.}
\end{cases}
$$
On the right-hand side of \eqref{eq:4}, we obtain the same.
\end{proof}

\begin{lemma} \label{lem:2}
With the notation of Theorem~\ref{thm:main1}, let
$Y=AXB^t-BX^tA^t$.
For odd~$m$ we have
\begin{equation} \label{eq:6} 
\sum_{i=1}^m(-1)^{i-1}\left(\sum_{j=1}^nA_{i,j}\right)\Pf (Y(i))
=(-1)^{\binom {(m-1)/2}2}g_{A,B}(X).
\end{equation}
\end{lemma}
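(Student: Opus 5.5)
Rather than argue with the cycle decomposition, we will reduce the lemma to Theorem~\ref{thm:main2} by bordering. Let $m$ be odd. Form the $(m+1)\times(n+1)$ matrices
$$
\hat A=\begin{pmatrix} A&O\\ O&1\end{pmatrix},\qquad
\hat B=\begin{pmatrix} B&O\\ \mathbf 0&0\end{pmatrix},\qquad
\hat X=\begin{pmatrix} X&O\\ \mathbf 1^t&0\end{pmatrix},
$$
so that the new row of $\hat A$ is $(0,\dots,0,1)$, the new row of $\hat B$ is zero, and the new row of $\hat X$ has $1$'s in the first $n$ entries and $0$ in the corner. Compute $\hat Y=\hat A\hat X\hat B^t-\hat B\hat X^t\hat A^t$. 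The upper-left $m\times m$ block is $AXB^t-BX^tA^t=Y$. The entries in the last column are $-(B\mathbf 1)_i$ up to sign. This is the wrong vector, because we need the row sums of~$A$.

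We therefore move the extra row to $\hat B$ instead. Take $\hat A=\begin{pmatrix}A&O\\ \mathbf 0&0\end{pmatrix}$, $\hat B=\begin{pmatrix}B&O\\ \mathbf 0&1\end{pmatrix}$ and $\hat X=\begin{pmatrix}X&\mathbf 1\\ \mathbf 0^t&0\end{pmatrix}$. Then $\hat A\hat X\hat B^t$ has last column $A\mathbf 1$, i.e.\ entries $\sum_j A_{i,j}$, and last row $0$. The term $\hat B\hat X^t\hat A^t$ is its transpose. Hence $\hat Y=\begin{pmatrix}Y&a\\-a^t&0\end{pmatrix}$ with $a_i=\sum_jA_{i,j}$.

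Expanding the Pfaffian along the last row and column gives
$$
\Pf\hat Y=\sum_{i=1}^m(-1)^{i+m+1}a_i\Pf(Y(i))=\sum_{i=1}^m(-1)^{i-1}a_i\Pf(Y(i)),
$$
since $m+1$ is even. This is exactly the left-hand side of~\eqref{eq:6}.

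Now apply Theorem~\ref{thm:main2} to $\hat A,\hat B,\hat X$ with $\hat m=m+1$:
$$
\Pf\hat Y=(-1)^{\binom{(m+1)/2}2}\sum_{|I|=|J|=(m+1)/2}\det\hat X_{I,J}\det(\hat A^I\hat B^J).
$$
The only nonzero entry of the last row of $(\hat A\,\hat B)$ is in column $n+1$ of $\hat B$. So $\det(\hat A^I\hat B^J)$ vanishes unless $n+1\in J$, and also $n+1\notin I$, since otherwise $\hat A^I$ has a zero column coming from the zero last column of~$\hat A$'s $A$-part. Write $J=J'\cup\{n+1\}$. Expanding $\det(\hat A^I\hat B^J)$ along its last row gives $\pm\det(A^IB^{J'})$, with sign $(-1)^{(m+1)+(m+1)}=+1$ because the $1$ sits in the last position. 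The submatrix $\hat X_{I,J}$ equals $(X_{I,J'}\ \mathbf 1)$, and moving the column $\mathbf 1$ to the front costs $(-1)^{(m-1)/2}$. So we get $\det(\mathbf 1\,X_{I,J'})$, and therefore
$$
\Pf\hat Y=(-1)^{\binom{(m+1)/2}2+(m-1)/2}\,g_{A,B}(X).
$$

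The main point to check is the sign. With $k=(m-1)/2$ we have $\binom{k+1}2-k=\binom k2$, and $\binom{k+1}2+k\equiv\binom{k+1}2-k \pmod 2$, so the total exponent is $\binom{(m-1)/2}2$, as claimed. The remaining care is in the ordering of rows of $(\hat A\,\hat B)$ versus the ordering used in $\det(A^IB^J)$, and in the cofactor sign of the Pfaffian expansion along the last index. Both are routine but should be verified on the case $m=1$.
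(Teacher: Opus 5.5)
Your argument is correct, but it takes a different route from the paper. The paper does not derive the lemma from Theorem~\ref{thm:main2}. Instead it re-runs that theorem's proof:
\begin{itemize}
\item by multilinearity in the rows of $\bigl(A\mid B\bigr)$ it reduces to $0$--$1$ matrices with a single $1$ per row;
\item it then checks by hand that interchanging two rows flips the sign of the left-hand side of \eqref{eq:6};
\item finally it produces $\det(\mathbf 1\,X_{R,S})$ by a Laplace expansion.
\end{itemize}
The sign check is not automatic, because that side is not a priori a Pfaffian, and the terms with $i\in\{r,s\}$ need a separate computation.

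Your bordering exhibits the left-hand side as the single Pfaffian $\Pf\hat Y$ of size $m+1$, expanded along the added index. The lemma then becomes an immediate corollary of Theorem~\ref{thm:main2}, which is proved independently, so there is no circularity. The alternation under row swaps comes for free. The column of ones in $\det(\mathbf 1\,X_{I,J})$ is produced by the extra column of $\hat X$. This is essentially the augmentation trick the paper uses in the introduction to treat odd $m$ in Okada's formula. It is shorter and explains why the left-hand side has the shape it does. The paper's route only buys uniformity with the preceding proof.

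There is one slip to fix. In the expansion of $\Pf\hat Y$ along the last index $N=m+1$, the cofactor sign of $\hat Y_{i,N}=a_i$ is $(-1)^{i+N+1}=(-1)^{i+m}$, not $(-1)^{i+m+1}$. It equals $(-1)^{i-1}$ because $m$ is odd. As you wrote it, $(-1)^{i+m+1}=(-1)^{i}$ when $m+1$ is even, which contradicts your very next equality. Your final expression is nonetheless the right one: for $m=1$ one has $\Pf\hat Y=a_1$.

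Your remaining signs are all correct:
\begin{itemize}
\item the factor $+1$ from the corner entry of $\hat B$, since $\hat A^I\hat B^J$ is block diagonal with blocks $A^IB^{J'}$ and $1$;
\item the factor $(-1)^{(m-1)/2}$ from moving the column of ones to the front;
\item the congruence $\binom{k+1}2+k\equiv\binom k2 \pmod 2$.
\end{itemize}
The row-ordering issue you flag at the end does not arise, since the first $m$ rows of $\bigl(\hat A\mid\hat B\bigr)$ are exactly those of $\bigl(A\mid B\bigr)$.
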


\begin{proof}
This proof parallels the proof of Theorem~\ref{thm:main2} given just
above.

The left-hand side of \eqref{eq:6} 
is multilinear in the rows of $\bigl(A\mid B\bigr)$.
The same applies to $g_{A,B}(X)$. Hence it suffices to check
the identity for matrices $\bigl(A\mid B\bigr)$ that contain at
most one~1 in each row. If $\bigl(A\mid B\bigr)$ contains a row of 0's then
both sides of~\eqref{eq:6} vanish, and the same is true if two
rows of $\bigl(A\mid B\bigr)$ are the same.
On the other hand, if $\bigl(A\mid B\bigr)$
has exactly one~1 in each row, then we may reorder the rows so that
the left-most~1 is in the top row, the left-most~1 of the remaining
rows is in the second row, etc.

We claim that
the reordering of the rows causes
the same sign on both sides of~\eqref{eq:6}.
Indeed, let us suppose that we interchange row~$r$ and row~$s$
of $\bigl(A\mid B\bigr)$. On the right-hand side this causes a
switch of sign. The same is true for the term $\Pf(Y(i))$ on the
left-hand side as long as $i\notin\{r,s\}$, due to 
\cite[Prop.~2.3(b)]{StemAE}. It remains to consider the terms
for $i\in\{r,s\}$, that is,
\begin{equation} \label{eq:Yrs} 
(-1)^{r-1}\left(\sum_{j=1}^nA_{s,j}\right)\Pf
\left(Y(r)\big\vert_{s\to r}\right)
+(-1)^{s-1}\left(\sum_{j=1}^nA_{r,j}\right)\Pf
\left(Y(s)\big\vert_{r\to s}\right).
\end{equation}
Here, $Y(r)\big\vert_{s\to r}$ denotes the skew-symmetric matrix
arising from~$Y$ by deleting the $r$-th row and column, and by
replacing the $s$-th row and column by the $r$-th row and column
of~$Y$, with an analogous meaning for $Y(s)\big\vert_{r\to s}$. 
Now, the matrix $Y(r)\big\vert_{s\to r}$ is essentially the same as
$Y(s)$, except that row~$r$ and column~$r$ are not in the right place.
By doing the appropriate simultaneous interchanges of rows and
columns, we see that
$\Pf\left(Y(r)\big\vert_{s\to r}\right)=(-1)^{r-s-1}\Pf(Y(s))$,
with an analogous equality for the other Pfaffian. Hence, the
expression in~\eqref{eq:Yrs} equals
$$
(-1)^{s}\left(\sum_{j=1}^nA_{s,j}\right)\Pf(Y(s))
+(-1)^{r}\left(\sum_{j=1}^nA_{r,j}\right)\Pf(Y(r)),
$$
which is effectively the negative of the corresponding terms on the
left-hand side of~\eqref{eq:6}.

In summary, we may restrict our attention to
$$
\bigl(A\mid B\bigr)=\begin{pmatrix}
\mathcal I^R&O\\
O&\mathcal I^S\end{pmatrix},
$$
where $R,S\subseteq[n]$ and $|R|+|S|=m$, and, as before,
given $R=\{r_1,r_2,\dots\}$ with $r_1<r_2<\cdots$ the matrix $\mathcal
I^R$ is
an $|R|\times n$ matrix consisting of 0's and 1's with the unique~1 in row~$i$
located in column~$r_i$. On the left-hand side of~\eqref{eq:6}, we obtain
\begin{multline*}
\sum_{i\ge1}(-1)^{i-1}
\Pf\begin{pmatrix} O&X_{R\backslash\{r_i\},S}\\
-X_{R\backslash\{r_i\},S}^t&O\end{pmatrix}\\
=\begin{cases}
\displaystyle(-1)^{\binom {(m-1)/2}2}\sum_{i=1}^{(m+1)/2}(-1)^{i-1}
\det(X_{R\backslash\{r_i\},S}),&\text{if $|R|=(m+1)/2$
and $|S|=(m-1)/2$},\\
0,&\text{otherwise.}
\end{cases}
\end{multline*}
By Laplace expansion, the expression in the first alternative on the
right-hand side simplifies to $(-1)^{\binom {(m-1)/2}2}
\det(\mathbf1\,X_{R,S})$.
On the right-hand side of \eqref{eq:6}, we obtain the same result.
\end{proof}

\section{Proof of Theorem~\ref{thm:main1}}
\label{sec:main}

\begin{proof}[Proof of \eqref{eq:1}]
The first observation is that
$$AXB^t-BX^tA^t=AXB^t-(AXB^t)^t$$
is a skew-symmetric matrix and that
$B\J A^t$ is a rank-1 matrix. Hence,
using Proposition~\ref{prop:1}, we obtain
$$
\det\bigl(AXB^t+B(\J -X^t)A^t\bigr)
=\Pf(AXB^t-BX^tA^t)\text{Expr}_1(A,X,B),
$$
where $\text{Expr}_1(A,X,B)$ is the result of the substitution
$Y=AXB^t-BX^tA^t$ and $M=B\J A^t$ in the second factor on the right-hand
side of~\eqref{eq:2}. On the other hand, again by Proposition~\ref{prop:1}, 
we also have
\begin{align*}
\det\bigl(AXB^t+B(\J -X^t)A^t\bigr)
&=
\det\bigl(B(\J -X^t)A^t-A(\J -X)B^t+A\J B^t\bigr)\\
&=
\Pf\bigl(B(\J -X^t)A^t-A(\J -X)B^t\bigr)\text{Expr}_2(A,X,B),
\end{align*}
where $\text{Expr}_2(A,X,B)$ is the result of the substitution
$Y=B(\J -X^t)A^t-A(\J -X)B^t$ and $M=A\J B^t$ in the second factor on the
right-hand side of~\eqref{eq:2}.

Combining the last two factorisations, we see that both
$\Pf(AXB^t-BX^tA^t)$ and $\Pf\bigl(B(\J -X^t)A^t-A(\J -X)B^t\bigr)$ are factors
of the determinant on the left-hand side of~\eqref{eq:1}. From 
Theorem~\ref{thm:main2} we know
that these are, up to sign, $f_{A,B}(X)$ and $f_{B,A}(\J -X^t)$. By
Lemma~\ref{lem:3}, both are
irreducible as polynomials in their variables.\footnote{The precise
argument is the following: we choose $A_0$ as the matrix which has
$a_i$ as entry in the $i$-th row and column, $i=1,2,\dots,m$, and
otherwise all entries equal zero. Similarly,
we choose $B_0$ as the matrix which has
$b_i$ as entry in the $i$-th row and column, $i=1,2,\dots,m$,
all other entries being equal to zero.
Then $A_0XB_0^t-B_0X^tA_0^t=(a_iX_{i,j}b_j-a_jX_{j,i}b_i)_{1\le i,j\le m}$,
which is equivalent to a generic skew-symmetric $m\times m$ matrix.
By Lemma~\ref{lem:3}, its Pfaffian is an irreducible polynomial in its
variables. This being the case, the Pfaffian of the more general
matrix $AXB^t-BX^tA^t$ must also be irreducible.}
Moreover, they are not equal or proportional to each other.
Hence, due to degree considerations, the two sides of~\eqref{eq:1} are
equal to each other, up to some multiplicative constant.
In order to determine that constant, we choose $A_0=B_0=(\Id_m\,O)$.
With that choice, the left-hand side of~\eqref{eq:1} equals
$$\det\bigl(A_0XB_0^t+B_0(\J-X^t)A_0^t\bigr)=
\det(X_{[m],[m]}-X_{[m],[m]}^t+\J_{[m],[m]}),$$
while we have
$$
f_{A_0,B_0}(X)=(-1)^{\binom {m/2}2}\Pf(A_0XB_0^t-B_0X^tA_0^t)
=(-1)^{\binom {m/2}2}\Pf(X_{[m],[m]}-X_{[m],[m]}^t)
$$
and
\begin{align*}
f_{B_0,A_0}(\J-X^t)&=(-1)^{\binom {m/2}2}
\Pf\bigl(B_0(\J-X^t)A_0^t-A_0(\J-X)B_0^t\bigr)\\
&=(-1)^{\binom {m/2}2}\Pf(X_{[m],[m]}-X_{[m],[m]}^t).
\end{align*}
By Proposition~\ref{prop:1} with $a_i=b_j=1$ for all~$i$ and~$j$,
we see that the sought-for multiplicative constant equals~1.
This finishes the proof of~\eqref{eq:1}.
\end{proof}

\begin{proof}[Proof of \eqref{eq:1o}]
We proceed in complete analogy to the previous proof.
Also here, we observe that
$$AXB^t-BX^tA^t=AXB^t-(AXB^t)^t$$
is a skew-symmetric matrix and that
$B\J A^t=(b_ia_j)_{1\le i,j\le m}$ is a rank-1 matrix with
$b_i=\sum_{\ell=1}^nB_{i,\ell}$ and
$a_j=\sum_{\ell=1}^nA_{j,\ell}$.
Hence,
using Proposition~\ref{prop:2} with $Y=AXB^t-BX^tA^t$
and $M=B\J A^t$, we obtain
\begin{equation} \label{eq:7}
\det\bigl(AXB^t+B(\J -X^t)A^t\bigr)\\
=\Bigg(
\sum_{i=1}^m(-1)^{i-1}a_i
\Pf(Y(i))\Bigg)\Bigg(
\sum_{i=1}^m(-1)^{i-1}b_i
\Pf(Y(i))\Bigg).
\end{equation}
By Lemma~\ref{lem:2}, we see that the first factor equals
$(-1)^{\binom {(m-1)/2}2}g_{A,B}(X)$. Similarly, by interchanging the
roles of~$A$ and~$B$ and replacing~$X$ by~$X^t$ in~\eqref{eq:6},
we see that the second factor equals
$(-1)^{(m-1)/2}(-1)^{\binom {(m-1)/2}2}g_{B,A}(X^t)$. This yields the
expression on the far right in~\eqref{eq:1o}.
On the other hand, if we would apply the argument of the previous
proof of~\eqref{eq:1} to~\eqref{eq:7}, then we obtain the alternative
in the centre of~\eqref{eq:1o}.
\end{proof}

\section{Applications}
\label{sec:appl}

Here we present applications of our main results in
Theorems~\ref{thm:main1} and~\ref{thm:main2}, and of the factorisation
results in Section~\ref{sec:Y+M}.

\subsection{The ``symmetric" cases of the factorisation identities from
Section~\ref{sec:Y+M}}
\label{sec:sym}

In this subsection, we state the special cases of
Propositions~\ref{prop:1} and~\ref{prop:2} where $A=B$ explicitly. 
A remarkable feature is that in both cases the right-hand sides
are perfect squares. Since, as we shall see in
Subsection~\ref{sec:AB}, Byun's minor summation
formula~\eqref{eq:MS2} is a special
case of this ``symmetric" situation, this provides the explanation for
the square on the right-hand side of Byun's formula.

\begin{corollary} \label{cor:5}
Let $Y$ be a skew-symmetric $m\times m$ matrix and
$M=(a_ia_j)_{1\le i,j\le m}$ a symmetric
rank-$1$ matrix of the same size.
If $m$ is even then
$$
\det(Y+M)=\Pf^2(Y)=\det(Y).
$$

If $m$ is odd then
$$
\det(Y+M)=\Bigg(
\sum_{i=1}^m(-1)^{i-1}a_i      
\Pf(Y(i))\Bigg)^2,
$$
where $Y(i)$ arises from $Y$ by deleting the $i$-th
row and column.
\end{corollary}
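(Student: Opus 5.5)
We obtain the corollary by specialising Propositions~\ref{prop:1} and~\ref{prop:2} to $b_i=a_i$ for all~$i$. Nothing else is needed beyond a short inspection of the resulting correction terms.

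\emph{The even case.} Apply Proposition~\ref{prop:1} with $b_j=a_j$. The second factor on the right-hand side of~\eqref{eq:2} contains the sum
$$\sum_{1\le i<j\le m}(-1)^{i+j-1}(a_ia_j-a_ja_i)\Pf(Y(i,j)).$$
Each summand vanishes identically, because $a_ia_j-a_ja_i=0$. Hence the second factor reduces to $\Pf(Y)$, and we get $\det(Y+M)=\Pf(Y)\cdot\Pf(Y)=\Pf^2(Y)$. Proposition~\ref{prop:0} then gives $\Pf^2(Y)=\det(Y)$, which is the claimed chain of equalities.

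\emph{The odd case.} Apply Proposition~\ref{prop:2} with $b_j=a_j$. The two factors on the right-hand side of~\eqref{eq:5} become literally the same expression, $\sum_{i=1}^m(-1)^{i-1}a_i\Pf(Y(i))$, so their product is its square.

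There is no genuine obstacle here. The only point worth a remark is the interpretation. In the even case, the rank-$1$ perturbation of a skew-symmetric matrix of even size does not change the determinant at all. Equivalently, the antisymmetrisation $a_ib_j-a_jb_i$ that drives the correction term in Proposition~\ref{prop:1} vanishes when the rank-$1$ matrix is symmetric. In the odd case, $\det(Y)=0$, and the symmetric perturbation turns the determinant into a perfect square.
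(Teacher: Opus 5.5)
Your proposal is correct and takes the same route as the paper. The paper states the corollary simply as the specialisation $b_i=a_i$ of Propositions~\ref{prop:1} and~\ref{prop:2}. In the even case the correction sum vanishes, and Proposition~\ref{prop:0} gives $\Pf^2(Y)=\det(Y)$; in the odd case the two factors coincide.
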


If we apply Corollary~\ref{cor:5} to
Theorem~\ref{thm:main1} with $A=B$, then we obtain
the following identity.

\begin{corollary} \label{cor:7}
Let $A$ be an $m\times n$ matrix,
$X$ an $n\times n$ matrix, and
$\J =(1)_{1\le i,j\le n}$. If $m$ is even, we have
$$
\det\bigl(A(X+\J -X^t)A^t\bigr)      
=\det\bigl(A(X-X^t)A^t\bigr)
=\bigl(f_{A}(X)\bigr)^2,
$$
where
$$
f_{A}(X)=\underset {|I|=|J|=m/2}{\sum_{I,J\subseteq[n]}}
\det X_{I,J} \det(A^IA^J),
$$
with $X_{I,J}$ denoting the submatrix of~$X$ consisting of the entries
in the
rows indexed by~$I$ and columns indexed by~$J$, and $A^I$ denoting
the submatrix of~$A$ consisting of the columns indexed by~$I$.
\end{corollary}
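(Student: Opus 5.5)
Specialise Theorem~\ref{thm:main1} and Corollary~\ref{cor:5} to $B=A$. First note $A(X+\J-X^t)A^t = A(X-X^t)A^t + A\J A^t$. The first summand $Y:=A(X-X^t)A^t = AXA^t-(AXA^t)^t$ is skew-symmetric. The second is the symmetric rank-$1$ matrix $(a_ia_j)_{1\le i,j\le m}$ with $a_i=\sum_{\ell=1}^n A_{i,\ell}$. Since $m$ is even, Corollary~\ref{cor:5} gives
$$
\det\bigl(A(X+\J-X^t)A^t\bigr)=\det(Y+M)=\det(Y)=\det\bigl(A(X-X^t)A^t\bigr),
$$
which is the first claimed equality. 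Corollary~\ref{cor:5} follows from Proposition~\ref{prop:1} with $b=a$: each term $a_ib_j-a_jb_i$ vanishes, so $\det(Y+M)=\Pf^2(Y)$, and Proposition~\ref{prop:0} identifies this with $\det(Y)$.

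For the second equality, use Proposition~\ref{prop:0} to write $\det(Y)=\Pf^2(Y)$. Then apply Theorem~\ref{thm:main2} with $B=A$: we have $Y=AXA^t-AX^tA^t$, so $\Pf(Y)=(-1)^{\binom{m/2}2}f_{A,A}(X)$. Squaring removes the sign, and $f_{A,A}(X)$ is exactly $f_A(X)$ as defined in the statement. This gives $\det\bigl(A(X-X^t)A^t\bigr)=\bigl(f_A(X)\bigr)^2$.

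As a cross-check, one can instead set $B=A$ directly in~\eqref{eq:1}, which gives $f_{A,A}(X)\,f_{A,A}(\J-X^t)$. Consistency then requires $f_A(\J-X^t)=f_A(X)$. This holds because Theorem~\ref{thm:main2} gives $\Pf\bigl(A(\J-X^t)A^t-A(\J-X)A^t\bigr)=\Pf\bigl(A(X-X^t)A^t\bigr)$, since the two $\J$-terms cancel. This check is not needed for the proof. There is no real obstacle; the only point requiring care is noting that the sign $(-1)^{\binom{m/2}2}$ disappears on squaring.
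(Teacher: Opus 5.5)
Your proposal is correct and is essentially the paper's argument: set $B=A$, use Corollary~\ref{cor:5} (Proposition~\ref{prop:1} with $b=a$, where the terms $a_ib_j-a_jb_i$ vanish) for the first equality, and Theorem~\ref{thm:main2} to identify $\Pf^2\bigl(A(X-X^t)A^t\bigr)$ with $\bigl(f_A(X)\bigr)^2$. Going through Theorem~\ref{thm:main2} directly rather than through~\eqref{eq:1} is only a cosmetic difference. Your observation that $f_A(\J-X^t)=f_A(X)$ correctly reconciles the two routes.
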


\subsection{The Pfaffian minor summation formula of Ishikawa and
Wakayama}
\label{sec:IsWa}

The purpose of this subsection is to demonstrate that
Theorem~\ref{thm:main2} implies the Pfaffian minor summation formula
of Ishikawa and Wakayama~\cite[Theorem~1 with $q=1$]{IsWaAA}.

\begin{theorem} \label{thm:IsWa}
Let $m$ and $n$ be positive integers, where $m$ is even.
Furthermore, let $A$ be an $m\times n$ matrix, and let
$Y=(Y_{i,j})_{1\le i,j\le n}$ be a
skew-symmetric matrix. Then, with the notation of
Theorem~\ref{thm:main1}, we have
\begin{equation} \label{eq:IsWa} 
\underset{|I|=m}{\sum_{I\subseteq[n]}}
\Pf(Y_{I,I}) \det (A^I)
 =
\Pf\bigl(AYA^t\bigr).
\end{equation}
\end{theorem}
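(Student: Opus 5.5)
The plan is to deduce \eqref{eq:IsWa} from Theorem~\ref{thm:main2} with $B=A$, by writing $Y$ as $X-X^t$ for a suitable $X$. Let $X$ be the strictly upper triangular part of $Y$: $X_{i,j}=Y_{i,j}$ for $i<j$ and $X_{i,j}=0$ otherwise. Skew-symmetry of $Y$ gives $Y=X-X^t$, so
$$
AYA^t=AXA^t-AX^tA^t .
$$
Theorem~\ref{thm:main2} with $B=A$ then yields
$$
\Pf(AYA^t)=(-1)^{\binom{m/2}2}\underset{|I|=|J|=m/2}{\sum_{I,J\subseteq[n]}}\det X_{I,J}\,\det(A^IA^J).
$$
It remains to regroup the right-hand side into the left-hand side of~\eqref{eq:IsWa}.

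First, $\det(A^IA^J)=0$ unless $I\cap J=\emptyset$, since otherwise two columns coincide. So only disjoint pairs contribute, and we group them by $K=I\cup J$, which has $|K|=m$. Let $\mathcal I^K$ be the $m\times n$ $0$--$1$ matrix from the proof of Theorem~\ref{thm:main2}, with rows $e_{k_1},\dots,e_{k_m}$ for $K=\{k_1<\dots<k_m\}$. For $I\subseteq K$, each column of $A$ indexed by $k_\ell$ equals $A^K$ times the $\ell$-th unit vector of length $m$, and that unit vector is exactly column $k_\ell$ of $\mathcal I^K$. Hence $A^I=A^K(\mathcal I^K)^I$, and likewise for $J$. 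Taking determinants of $A^IA^J=A^K\bigl((\mathcal I^K)^I(\mathcal I^K)^J\bigr)$ gives
$$
\det(A^IA^J)=\det(A^K)\,\det\bigl((\mathcal I^K)^I(\mathcal I^K)^J\bigr).
$$
Note that the second factor is independent of~$A$.

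Therefore the right-hand side equals
$$
\sum_{|K|=m}\det(A^K)\,\Bigl[(-1)^{\binom{m/2}2}\underset{I\sqcup J=K}{\sum}\det X_{I,J}\det\bigl((\mathcal I^K)^I(\mathcal I^K)^J\bigr)\Bigr].
$$
The bracket is precisely $(-1)^{\binom{m/2}2}f_{\mathcal I^K,\mathcal I^K}(X)$, because terms with $I$ or $J$ not contained in $K$ vanish: such a column of $\mathcal I^K$ is zero. Applying Theorem~\ref{thm:main2} once more, now with $A=B=\mathcal I^K$, identifies the bracket with
$$
\Pf\bigl(\mathcal I^K(X-X^t)(\mathcal I^K)^t\bigr)=\Pf(Y_{K,K}).
$$
This gives \eqref{eq:IsWa}.

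The only delicate step is this regrouping. Rather than tracking the sign of the column-sorting permutation that turns $A^IA^J$ into $A^K$ by hand, the plan avoids it: the factorisation through $\mathcal I^K$ and the second application of Theorem~\ref{thm:main2} absorb all signs automatically. The routine check is that $(\mathcal I^K)^I$ really selects the correct coordinates, so that $A^I=A^K(\mathcal I^K)^I$ holds with the columns in their natural order.
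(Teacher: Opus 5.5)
Your proof is correct, but its key step takes a different route from the paper's.

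The first half is the same as the paper's argument. You take $X$ to be the strictly upper triangular part of $Y$, apply Theorem~\ref{thm:main2} with $B=A$, discard pairs with $I\cap J\ne\emptyset$, and group by the union $K=I\cup J$.

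From there the paper writes (in your notation) $\det(A^IA^J)=(-1)^{\inv(IJ)}\det(A^K)$. It then evaluates the remaining inner sum $\sum_{I\cup J=K}(-1)^{\binom{m/2}2+\inv(IJ)}\det X_{I,J}$ with the separate Lemma~\ref{lem:IsWa}. That lemma is proved by induction on $m$: it isolates the entries $Y_{j,m}$ of the last column, tracks inversion counts, and finishes with the Laplace expansion of the Pfaffian. It relies on the upper triangularity of $X$ along the way.

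You instead factor $A^IA^J=A^K\bigl((\mathcal I^K)^I(\mathcal I^K)^J\bigr)$ and recognise the inner sum as $(-1)^{\binom{m/2}2}f_{\mathcal I^K,\mathcal I^K}(X)$. You then apply Theorem~\ref{thm:main2} a second time with $A=B=\mathcal I^K$, using $\mathcal I^KY(\mathcal I^K)^t=Y_{K,K}$.

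Since $\det\bigl((\mathcal I^K)^I(\mathcal I^K)^J\bigr)=(-1)^{\inv(IJ)}$, your argument shows that Lemma~\ref{lem:IsWa} is just the case $A=B=\mathcal I^K$ of Theorem~\ref{thm:main2}. This buys you two things. It removes the induction and all sign bookkeeping. It also works for any $X$ with $X-X^t=Y$, not only the upper triangular one. What the paper's route gives in exchange is a direct, self-contained combinatorial verification of that Pfaffian expansion.

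You leave one detail implicit: in $f_{\mathcal I^K,\mathcal I^K}(X)$, terms with $I,J\subseteq K$ overlapping must vanish. Your earlier repeated-column remark already covers this.
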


\begin{proof}
Let $X$ be the $n\times n$ upper triangular matrix with 0's on the
main diagonal and $X_{i,j}=Y_{i,j}$ for $1\le i<j\le n$.
Then we have $Y=X-X^t$.
Setting $A=B$ in Theorem~\ref{thm:main2}, we get 
$$
\Pf (AYA^t)=
\Pf (AXA^t-AX^tA^t)
=(-1)^{\binom {m/2}2}
\underset {|J|=|K|=m/2}{\sum_{J,K\subseteq[n]}}
\det (X_{J,K}) \det(A^JA^K).
$$
Clearly, the determinant on the right-hand side vanishes if
$J$ and~$K$ have a non-trivial intersection. Hence, we may rewrite the
above equation as
$$
\Pf (AYA^t)
=(-1)^{\binom {m/2}2}
\underset {|I|=m}{\sum_{I\subseteq[n]}}
\det(A^I)\underset{|J|=|K|=m/2}{\sum_{J\cup K=I}}
(-1)^{\inv(JK)}\det (X_{J,K} ),
$$
where $\inv(JK)$ denotes the number of inversions in the word $JK$,
that is, the number of pairs $(j,k)$ with $j\in J$, $k\in K$, and
$j>k$. The identity~\eqref{eq:IsWa} now follows from
Lemma~\ref{lem:IsWa} below.
\end{proof}

\begin{lemma} \label{lem:IsWa}
Let $m$ be an even positive integer and $I$ a subset of~$[n]$
with $|I|=m$. Then, with the notation from 
Theorem~\ref{thm:IsWa} and its proof, we have
\begin{equation} \label{eq:inv} 
\underset{|J|=|K|=m/2}{\sum_{J\cup K=I}}
(-1)^{\binom {m/2}2+\inv(JK)}\det (X_{J,K} )
=\Pf(Y_{I,I}).
\end{equation}
\end{lemma}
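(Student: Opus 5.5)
We may assume that $I=[m]$. Relabelling the elements of $I$ by $1,\dots,m$ in increasing order changes neither the inversion numbers $\inv(JK)$ nor the crossing structure of matchings. Write $k=m/2$. The plan is to expand both sides of \eqref{eq:inv} as sums over perfect matchings of $[m]$ and then compare signs term by term.

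\emph{Expanding the left-hand side.} Fix $J=\{j_1<\dots<j_k\}$ and $K=\{k_1<\dots<k_k\}$. We write
$$
\det(X_{J,K})=\sum_{\pi}\sgn(\pi)\prod_{s=1}^k X_{j_s,\pi(j_s)},
$$
where $\pi$ runs over bijections $J\to K$. Here $\sgn(\pi)$ is the sign of the permutation $s\mapsto t$ defined by $\pi(j_s)=k_t$. Since $X$ is strictly upper triangular, a term survives only if $j_s<\pi(j_s)$ for all $s$, and then $X_{j_s,\pi(j_s)}=Y_{j_s,\pi(j_s)}$.

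Each surviving triple $(J,K,\pi)$ yields the perfect matching $\mu=\{\{j,\pi(j)\}:j\in J\}$ of $[m]$. Conversely, every perfect matching $\mu$ arises from exactly one such triple: $J$ is the set of smaller elements of the pairs, $K$ the set of larger ones, and $\pi$ sends each smaller element to its partner. So the left-hand side of \eqref{eq:inv} equals
$$
\sum_\mu (-1)^{\binom k2+\inv(JK)}\sgn(\pi)\prod_{\{i<j\}\in\mu}Y_{i,j}.
$$
Comparing with the definition of $\Pf(Y)$, it remains to prove the sign identity
$$
\sgn\mu=(-1)^{\binom k2+\inv(JK)}\sgn(\pi).
$$

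\emph{The sign identity.} Let $w$ be the word $j_1\,\pi(j_1)\,j_2\,\pi(j_2)\cdots j_k\,\pi(j_k)$, regarded as a permutation of $[m]$. I will use the standard fact that $(-1)^{\text{cr}(\mu)}=\sgn(w)$. This sign does not depend on the order in which the pairs are listed or on the order within a pair, as long as each pair is written with its smaller element first. I would cite \cite[Prop.~2.2/Lemma~2.1]{StemAE} for this, or prove it by induction: remove a pair $\{i,j\}$ with $i<j$ adjacent in some sense, and check that the crossings involving $\{i,j\}$ and the inversions of $w$ created by moving $i,j$ to the front have the same parity, namely that of $j-i-1$ as observed in the proof of Proposition~\ref{prop:1}.

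With this fact, the sign is computed by rearranging $w$ in two steps.
\begin{enumerate}
\item Move all $j_s$ to the front to obtain $j_1\cdots j_k\,\pi(j_1)\cdots\pi(j_k)$. This costs $1+2+\dots+(k-1)$ adjacent transpositions, so a sign $(-1)^{\binom k2}$.
\item Sort the second half into $k_1\cdots k_k$. This costs $\sgn(\pi)$.
\end{enumerate}
The resulting word is $JK$. It has no inversions inside $J$ or inside $K$, so its sign is $(-1)^{\inv(JK)}$. Hence
$$
\sgn\mu=\sgn(w)=(-1)^{\binom k2}\sgn(\pi)(-1)^{\inv(JK)},
$$
which is the required identity.

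The main obstacle is making the crossing-sign/permutation-sign correspondence airtight. The rest is bookkeeping.
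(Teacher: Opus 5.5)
Your proof is correct, but it takes a different route from the paper.

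\textbf{The paper's route.} It argues by induction on $m$. Since $X$ is strictly upper triangular, $m$ must lie in $K$. The paper extracts the coefficient of $Y_{j,m}$ in $\det(X_{J,K})$ by expanding along the last column. It then tracks the inversion and cofactor signs when $j$ and $m$ are removed; they combine to $(-1)^{m-j-1}(-1)^{\binom{(m-2)/2}2+\inv(J'K')}$. It closes with the induction hypothesis and the Laplace expansion of $\Pf(Y_{[m],[m]})$ along its last row.

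\textbf{Your route.} You argue globally instead. The non-vanishing terms of $\sum_{J,K}\det(X_{J,K})$ are indexed by triples $(J,K,\pi)$ with $j<\pi(j)$, and these correspond bijectively to perfect matchings of $[m]$. Everything then reduces to the single identity $\sgn\mu=(-1)^{\binom k2+\inv(JK)}\sgn(\pi)$, which your two-step rearrangement of $w$ into the word $JK$ establishes correctly.

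\textbf{Comparison.} Your version makes it transparent that each matching arises exactly once, via its set of left endpoints, and it avoids the inversion bookkeeping. The price is the fact $\sgn(w)=(-1)^{\text{cr}(\mu)}$, which the paper uses only implicitly through the Laplace expansion of the Pfaffian.

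\textbf{The step you flag as the main obstacle.} It is short, and in your situation you do not even need independence of the order of the pairs. In $w$ the arcs are listed by increasing left endpoints. So for two arcs $\{a<b\}$, $\{c<d\}$ with $a<c$, the letters $a,b,c,d$ occur in this order. They contribute $0$, $1$ or $2$ inversions according as $b<c$, $c<b<d$ or $d<b$, i.e.\ disjoint, crossing or nested arcs. No pair is internally inverted, so summing over all pairs of arcs gives $\inv(w)\equiv\text{cr}(\mu)\pmod 2$.
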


\begin{proof}
Without loss of generality, we may assume that $I=[m]$.

We prove \eqref{eq:inv} by induction on~$m$. If $m=2$, then on the
left-hand side (with $I=\{1,2\}$) there is only one choice for
$J$ and~$K$, namely $J=\{1\}$ and $K=\{2\}$.
(The reader must recall that $X$ is an upper triangular matrix.)
Hence, this left-hand
side becomes $(-1)^{\binom 12+\inv(12)}\det(Y_{1,2})=Y_{1,2}$.
On the right-hand side we obtain the same.

For the induction step, we first observe that $K$ must contain~$m$
because otherwise the last row of $X_{J,K}$ would entirely
consist of~0's so that its determinant vanishes.
In a determinant $\det(X_{J,K})$ we focus on terms that
contain the variable $Y_{j,m}$, for a fixed~$j$. Roughly speaking, we
would like to ``remove" $Y_{j,m}$ from the expression and then apply
the induction hypothesis to whatever remains.
So, let $J'=J\backslash\{j\}$ and $K'=K\backslash\{m\}$.
We then have 
$$\inv(JK)=\inv(J'K')+|K\cap[1,j-1]|$$
and the coefficient of $Y_{j,m}$ in the determinant $\det(
X_{J,K})$ equals
$$
\coef{Y_{j,m}}\det(X_{J,K})
=(-1)^{|J\cap[j+1,m]|}\det(X_{J',K'}),
$$
which is seen by expansion along the last column.
Since we have
\begin{align*}
|J\cap[j+1,m]|&=|([m]\backslash K)\cap[j+1,m]|\\
&=|[j+1,m]\backslash (K\cap[j+1,m])|\\
&=m-j-|K\cap[j+1,m]|\\
&=m-j-\bigl(\tfrac {m} {2}-|K\cap[1,j-1]|\bigr)\\
&=\tfrac {m} {2}-j+|K\cap[1,j-1]|,
\end{align*}
we obtain for the left-hand side of \eqref{eq:inv} (with $I=[m]$)
\begin{multline*}
\sum_{j=1}^mY_{j,m}
\underset{|J'|=|K'|=(m-2)/2}{\sum_{J'\cup K'=[m-1]\backslash\{j\}}}
(-1)^{\binom {m/2}2+\inv(J'K')
+|K\cap[1,j-1]|+\frac {m} {2}-j+|K\cap[1,j-1]|}\det (X_{J',K'}) 
\\=
\sum_{j=1}^m(-1)^{m-j-1}Y_{j,m}
\underset{|J'|=|K'|=(m-2)/2}{\sum_{J'\cup K'=[m-1]\backslash\{j\}}}
(-1)^{\binom {(m-2)/2}2+\inv(J'K')}\det (X_{J',K'}) .
\end{multline*}
By induction hypothesis, this reduces to
$$
\sum_{j=1}^m(-1)^{m-j-1}Y_{j,m}
\Pf(Y_{[m-1]\backslash\{j\},[m-1]\backslash\{j\}})
=\Pf(Y_{[m],[m]}),
$$
by Laplace expansion of Pfaffians. This establishes \eqref{eq:inv}.
\end{proof}

\subsection{Asymmetric extensions of the minor summation formulae of
Byun and of Okada}
\label{sec:AB}

The goal of this subsection is to show that Byun's minor summation
formula~\eqref{eq:MS2} is implied by Theorem~\ref{thm:main1}, and,
more generally, to derive an extension of Byun's formula that
involves {\it two} rectangular matrices $A$ and~$B$, see
Theorem~\ref{thm:AB} below. Moreover, we also get an extension of
Okada's minor summation formula~\eqref{eq:MS} for free; see
Theorem~\ref{thm:AB2} below.
However, before we are able to derive these results, we need to show 
two auxiliary results.

\begin{lemma} \label{lem:X1}
Let $n$ and $\ell$ be positive integers.
Let $X$ be an upper triangular $n\times n$ matrix with $1$'s above
the diagonal, that is, $X_{i,j}=1$ for $1\le i<j\le n$.
Furthermore let $I=\{i_1,i_2,\dots,i_\ell\}$ and~$J=\{j_1,j_2,\dots,j_\ell\}$
be subsets of~$[n]$ of equal cardinality, both of which we assume to
be ordered.
Then, with the notation of Theorem~\ref{thm:main1}, we have
\begin{equation} \label{eq:X1} 
\det(X_{I,J})=\begin{cases}
\prod _{k=1} ^{\ell}X_{i_k,i_k}^{\chi(i_k=j_k)}
(1-X_{i_k,i_k})^{\chi(j_{k-1}=i_k)},&
\text{if }1\le i_1\le j_1\le i_2\le\dots\le j_\ell\le n,\\
0,&\text{otherwise}.
\end{cases}
\end{equation}
Here, by convention, $j_0=0$, and
$\chi(\mathcal S)=1$ if $\mathcal S$ is true and
$\chi(\mathcal S)=0$ otherwise.
\end{lemma}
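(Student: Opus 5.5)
The plan is to simplify $X_{I,J}$ by elementary row operations until it becomes upper triangular (or visibly singular), and then read off the determinant. Write $d_i:=X_{i,i}$. Then $X_{i,j}=1$ for $i<j$, $X_{i,i}=d_i$, and $X_{i,j}=0$ for $i>j$. For $k=1,\dots,\ell-1$, replace row~$k$ of $X_{I,J}$ by (row $k$) $-$ (row $k+1$); this does not change the determinant.

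Since $i_k<i_{k+1}$, a direct comparison of entries shows what the new row~$k$ looks like. Its entry in column $j\in J$ is
\begin{itemize}
\item $d_{i_k}$ if $j=i_k$,
\item $1$ if $i_k<j<i_{k+1}$,
\item $1-d_{i_{k+1}}$ if $j=i_{k+1}$,
\item $0$ otherwise.
\end{itemize}
The last row is unchanged, so its entries are $d_{i_\ell}$ at $j=i_\ell$, $1$ for $j>i_\ell$, and $0$ otherwise. Consequently, with the convention $i_{\ell+1}:=n$, row~$k$ of the modified matrix is supported on the columns $j_s$ with $j_s\in[i_k,i_{k+1}]$.

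Next comes the vanishing part. Suppose the interlacing $i_1\le j_1\le i_2\le\dots\le j_\ell$ fails. Then there is a $k$ with either $j_k<i_k$ or $j_k>i_{k+1}$.
\begin{itemize}
\item In the case $j_k<i_k$, the columns $j_1,\dots,j_k$ can only be nonzero in rows $1,\dots,k-1$, because rows $k,\dots,\ell$ live on columns $\ge i_k$. These are $k$ columns inside a $(k-1)$-dimensional coordinate space, hence linearly dependent, and the determinant is zero.
\item In the case $j_k>i_{k+1}$, the rows $1,\dots,k$ live on columns $\le i_{k+1}$, that is, among $j_1,\dots,j_{k-1}$. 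These are $k$ rows in $k-1$ columns, so again the determinant vanishes.
\end{itemize}

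If the interlacing holds, then $j_k\in[i_k,i_{k+1}]$. Moreover, row~$k$ can meet the other columns only at $j_{k+1}$, and only when $j_{k+1}=i_{k+1}$. The reason is that $j_{k-1}\le i_k$ with equality excluded from the support unless $j_{k-1}=i_k$, and that case is handled symmetrically; I will double-check it at the endpoint $i_k$. With this in place, the matrix is upper triangular (indeed upper bidiagonal), and the determinant is the product of the diagonal entries in positions $(k,j_k)$. That entry equals $d_{i_k}$ if $j_k=i_k$, equals $1-d_{i_{k+1}}$ if $j_k=i_{k+1}$, and equals $1$ otherwise. Reindexing the second kind of factor by $k+1$ turns the product into $\prod_k X_{i_k,i_k}^{\chi(i_k=j_k)}(1-X_{i_k,i_k})^{\chi(j_{k-1}=i_k)}$, as claimed. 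Note that the two conditions $j_k=i_k$ and $j_{k-1}=i_k$ cannot hold simultaneously, since the $j$'s are distinct.

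The main point requiring care is the endpoint bookkeeping. I need to verify precisely that a column $j_{k-1}=i_k$ does not contribute below the diagonal in row~$k$, since row~$k$ does have support at $i_k$. If it does contribute there, I will instead use the differences (row $k$) $-$ (row $k-1$) from the top down, or equivalently argue with column operations, so that the matrix becomes lower bidiagonal. Either way, a triangular matrix with the same diagonal entries results, and the product formula is unaffected.
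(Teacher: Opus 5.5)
Your entry table for the differenced rows is correct, and so is your vanishing argument: the two rank-counting cases $j_k<i_k$ and $j_k>i_{k+1}$ cover every failure of interlacing. The gap is in the evaluation step, as you suspected. The claim that the modified matrix $M$ is upper bidiagonal is false, and it fails exactly in the cases that produce the factors $1-X_{i_k,i_k}$. If $j_{k-1}=i_k$, then row~$k$ of $M$ has the entry $X_{i_k,i_k}$ in column $k-1$, below the diagonal, while the diagonal entry of row $k-1$ is $1-X_{i_k,i_k}$. For example, $I=\{1,2,4\}$, $J=\{2,3,4\}$ gives
\[
M=\begin{pmatrix}1-X_{2,2}&0&0\\ X_{2,2}&1&1-X_{4,4}\\ 0&0&X_{4,4}\end{pmatrix},
\]
which is neither upper nor lower triangular. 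Your fallback does not repair this. Differencing (row $k$)$\,-\,$(row $k-1$) leaves row~1 unchanged, and under interlacing row~1 of $X_{I,J}$ has no zero entry at all (all $j_s\ge i_1$), so no lower triangular matrix can result. ``Argue with column operations'' is not a specified argument.

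What you need instead is the observation that $M$ is tridiagonal with $M_{k,k+1}M_{k+1,k}=0$ for every $k$. Row $k$ is supported on the columns $j_s\in[i_k,i_{k+1}]$, and under interlacing this forces $s\in\{k-1,k,k+1\}$. Moreover, $M_{k,k+1}\neq0$ forces $j_{k+1}=i_{k+1}$, whereas $M_{k+1,k}\neq0$ forces $j_k=i_{k+1}$, and these are incompatible. In the Leibniz expansion of a tridiagonal matrix, only products of disjoint adjacent transpositions of $k$ and $k+1$ can contribute, and each such transposition brings the factor $M_{k,k+1}M_{k+1,k}=0$. So only the identity survives and $\det M=\prod_k M_{k,k}$; equivalently, use $D_k=M_{k,k}D_{k-1}-M_{k,k-1}M_{k-1,k}D_{k-2}$. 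With this added, your diagonal bookkeeping gives the formula, once you fix one small slip. With $i_{\ell+1}:=n$, the unmodified last row has diagonal entry $1$, not $1-X_{n,n}$, when $j_\ell=n>i_\ell$. So take $i_{\ell+1}:=n+1$, or treat row $\ell$ separately.

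Once repaired, yours is a genuinely different, non-inductive route from the paper's. The paper inducts on $\ell$ via the last row and last two columns, and obtains the factor $1-X_{i_\ell,i_\ell}$ by subtracting the penultimate column from the last one when $j_{\ell-1}=i_\ell<j_\ell$. That one-step induction never has to deal with the coupling between neighbouring rows that your global differencing creates.
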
 

\begin{proof}
We prove the claimed identity by induction on~$\ell$.

The claim is trivially true for $\ell=0$ (if we interpret both the
determinant of a $0\times0$ matrix and the empty product as~1)
and for $\ell=1$. Therefore, from now on, we assume that $\ell\ge2$.

For the induction step, we distinguish several cases.

\begin{itemize}
\item If $i_\ell= j_\ell$, then the last row of~$X_{I,J}$ 
contains only one non-zero element,
and we get $\det(X_{I,J}) = X_{ i_\ell ,i_\ell}
\det X_{ I\backslash
\{ i_\ell \}, J\backslash \{ j_\ell \} }$.
Since $j_{\ell-1} < j_\ell = i_\ell$,
the result is true by induction hypothesis. 
\item
If $i_\ell < j_\ell$ and $j_{\ell-1} > i_\ell $, then the last two
columns of the minor are all~1's and the determinant vanishes, as
desired.
\item
If $i_\ell < j_\ell$ and $j_{\ell-1} < i_\ell $, then the last row
contains only one non-zero element, namely a~1, and we get $\det(X_{I,J}) =
\det X_{I\backslash \{ i_\ell \}, J\backslash \{ j_\ell \} }$.
Since $j_{\ell-1} < i_\ell $, the
result is true by induction hypothesis. 
\item
If $i_\ell < j_\ell$ and $j_{\ell-1} = i_\ell $, then the last column
only contains 1's, while the second-to-last column contains $\ell-1$
entries~1, but ends with $X_{ i_\ell, i_\ell }$. We subtract the
second-to-last column from the last column. Then the last column is
all 0's except for the last entry $1 - X_{ i_\ell ,i_\ell }$. This gives
$\det(X_{I,J}) = (1 - X_{ i_\ell, i_\ell })
\det X_{ I\backslash \{ i_\ell \}, J\backslash \{ j
_\ell \}}$. Since the equality at $j_{\ell-1} = i_\ell$
has already been
taken into account, the result is true by induction hypothesis. 
\item
If $i_\ell > j_\ell$ then the last row of $X_{I,J}$ is a row of 0's.
Hence its determinant is zero.
\end{itemize}
This completes the proof of the lemma.
\end{proof}

\begin{lemma} \label{lem:X2}
Let $n$ and $\ell$ be positive integers.
Let $X$ be an upper triangular $n\times n$ matrix with $1$'s above
the diagonal, that is, $X_{i,j}=1$ for $1\le i<j\le n$.
Furthermore let $I=\{i_1,i_2,\dots,i_{\ell+1}\}$
and~$J=\{j_1,j_2,\dots,j_\ell\}$
be subsets of\/~$[n]$, both of which we assume to
be ordered.
Then, with the notation of Theorem~\ref{thm:main1}, we have
\begin{equation} \label{eq:X2} 
\det(\mathbf 1\,X_{I,J})=\begin{cases}
(-1)^\ell\prod _{k=1} ^{\ell}X_{i_k,i_k}^{\chi(i_k=j_k)}
(1-X_{i_{k+1},i_{k+1}})^{\chi(j_{k}=i_{k+1})},&\\
&\kern-2cm
\text{if }1\le i_1\le j_1\le i_2\le\dots\le j_\ell\le i_{\ell+1}\le n,\\
0,&\kern-2cm\text{otherwise}.
\end{cases}
\end{equation}
Here, $\mathbf1$ denotes a column of $\ell+1$ elements~$1$.
\end{lemma}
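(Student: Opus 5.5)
I will derive Lemma~\ref{lem:X2} from Lemma~\ref{lem:X1} by a Laplace expansion of $\det(\mathbf 1\,X_{I,J})$ along its first column, the column of $1$'s. This gives
$$
\det(\mathbf 1\,X_{I,J})=\sum_{k=1}^{\ell+1}(-1)^{k-1}\det\bigl(X_{I\backslash\{i_k\},J}\bigr).
$$
Each minor on the right is evaluated by Lemma~\ref{lem:X1}. Write $I\backslash\{i_k\}=\{i'_1<\dots<i'_\ell\}$. Then the minor is nonzero only if $i'_1\le j_1\le i'_2\le\dots\le i'_\ell\le j_\ell$.

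An alternative, closer in spirit to the proof of Lemma~\ref{lem:X1}, is to induct on $\ell$ directly. The base case $\ell=0$ is $\det(1)=1$. For the induction step I would examine the last row of $(\mathbf 1\,X_{I,J})$, which is $(1,X_{i_{\ell+1},j_1},\dots,X_{i_{\ell+1},j_\ell})$, and distinguish the following cases.
\begin{itemize}
\item If $i_{\ell+1}>j_\ell$, then the last row is $(1,0,\dots,0)$ except possibly for earlier entries. Since $j_\ell$ is the largest column index, all $X$-entries of that row vanish, so the row is exactly $(1,0,\dots,0)$. I would then subtract it from all other rows, which kills the first column, and expand. Up to a sign $(-1)^{\ell}$ this leaves $\det(X_{I\backslash\{i_{\ell+1}\},J})$. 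That minor is evaluated by Lemma~\ref{lem:X1} and forces $i_1\le j_1\le\dots\le i_\ell\le j_\ell$, which together with $j_\ell<i_{\ell+1}$ gives the claimed chain with no factor $(1-X)$.
\item If $i_{\ell+1}=j_\ell$, then the last row is $(1,0,\dots,0,X_{j_\ell,j_\ell})$. I would subtract from it the second-to-last row $(1,\ast,\dots,\ast,X_{i_\ell,j_\ell})$. Provided $i_\ell<j_\ell$, the entry $X_{i_\ell,j_\ell}$ equals $1$ and the new last entry becomes $X_{j_\ell,j_\ell}-1$. The remaining entries of the second-to-last row must then be handled, so this case needs care.
\item If $i_{\ell+1}<j_\ell$, then $i_\ell<j_\ell$ as well, so the last column is all $1$'s and equals the $\mathbf 1$ column, and the determinant vanishes. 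This is consistent with the claim, since the chain requires $j_\ell\le i_{\ell+1}$.
\end{itemize}
The cleanest way through the case $i_{\ell+1}=j_\ell$ is to subtract the first column $\mathbf 1$ from the last column, i.e.\ the column indexed $j_\ell$. In rows with $i_k<j_\ell$ the last-column entry is $1$, so it becomes $0$. In row $\ell+1$ it becomes $X_{j_\ell,j_\ell}-1$. In any row with $i_k>j_\ell$ it becomes $-1$, but there are no such rows here since $i_{\ell+1}=j_\ell$ is the maximum. Expanding along the last column then gives
$$
\det(\mathbf 1\,X_{I,J})=(X_{j_\ell,j_\ell}-1)\det\bigl(\mathbf 1\,X_{I\backslash\{i_{\ell+1}\},J\backslash\{j_\ell\}}\bigr),
$$
with the cofactor sign $+1$ coming from the bottom-right position. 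The induction hypothesis supplies $(-1)^{\ell-1}$ times the product together with the chain $i_1\le\dots\le j_{\ell-1}\le i_\ell$. Writing $X_{j_\ell,j_\ell}-1=-(1-X_{i_{\ell+1},i_{\ell+1}})$ produces the sign $(-1)^\ell$ and the new factor $(1-X_{i_{\ell+1},i_{\ell+1}})^{\chi(j_\ell=i_{\ell+1})}$. The remaining constraint $i_\ell\le j_\ell$ holds automatically, because $i_\ell<i_{\ell+1}=j_\ell$.

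The same column operation also streamlines the case $i_{\ell+1}>j_\ell$. There the last column becomes $0$ except in the bottom row, whose entry becomes $-1$. Expanding gives $-\det(\mathbf 1\,X_{I',J'})$ with $I'=I\backslash\{i_{\ell+1}\}$ and $J'=J\backslash\{j_\ell\}$. However, $I'$ then has $\ell$ elements against $\ell-1$ columns of $J'$ plus $\mathbf 1$, which is too many columns, so this is wrong. Hence for $i_{\ell+1}>j_\ell$ I use the row operation from the first case instead, reducing to Lemma~\ref{lem:X1}. That lemma supplies the factors $X_{i_k,i_k}^{\chi(i_k=j_k)}$ and $(1-X_{i_k,i_k})^{\chi(j_{k-1}=i_k)}$, which are exactly the needed factors with the index shift. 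I expect the main obstacle to be bookkeeping: checking the signs and that the exponent patterns of Lemma~\ref{lem:X1} reindex correctly into the form stated in \eqref{eq:X2}.
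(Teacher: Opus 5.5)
Your final argument is correct and is essentially the paper's proof. It uses induction on $\ell$ with three cases: for $i_{\ell+1}>j_\ell$, reduction to $(-1)^\ell\det(X_{I\backslash\{i_{\ell+1}\},J})$ and then Lemma~\ref{lem:X1}; for $i_{\ell+1}<j_\ell$, the last column equals $\mathbf 1$; and the case $i_{\ell+1}=j_\ell$. The only real difference is the equality case, where your subtraction of the $\mathbf 1$ column from the last column gives the single term $(X_{j_\ell,j_\ell}-1)\det(\mathbf 1\,X_{I\backslash\{i_{\ell+1}\},J\backslash\{j_\ell\}})$ and needs only the induction hypothesis, whereas the paper expands along the last row and recombines a Lemma~\ref{lem:X1} term with an induction-hypothesis term. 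Separately, your stated reason for discarding the column operation when $i_{\ell+1}>j_\ell$ is wrong, since $(\mathbf 1\,X_{I',J'})$ is square; the actual obstruction is that rows with $i_k\ge j_\ell$, $k\le\ell$, also get nonzero last-column entries, but this aside does not affect your proof.
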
 

\begin{proof}
We prove the claimed identity by induction on~$\ell$, also using
the result from Lemma~\ref{lem:X1}.

The claim is trivially true for $\ell=0$ (if we interpret the empty
product as~1)
and for $\ell=1$. Therefore, from now on, we assume that $\ell\ge2$.

For the induction step, we distinguish several cases.

\begin{itemize}
\item If $i_{\ell+1}= j_\ell$, then the last row of~$X_{I,J}$ 
contains only one non-zero element, namely
$X_{i_{\ell+1},j_\ell}=X_{i_{\ell+1},i_{\ell+1}}$
as the last entry in the row. By expansion along the last row,
we get
\begin{equation} \label{eq:8} 
\det(\mathbf1\,X_{I,J}) =
(-1)^{\ell}\det(X_{I\backslash\{i_{\ell+1}\},J})+
X_{ i_{\ell+1} ,i_{\ell+1}}
\det(\mathbf1\, X_{ I\backslash
\{ i_{\ell+1} \}, J\backslash \{ j_\ell \} }).
\end{equation}
Since $i_{\ell} <  i_{\ell+1}=j_\ell$, by Lemma~\ref{lem:X1} and
by induction hypothesis, we obtain
\begin{align*}
\det(\mathbf1\,X_{I,J}) &=(-1)^{\ell}
\prod _{k=1} ^{\ell}X_{i_k,i_k}^{\chi(i_k=j_k)}
(1-X_{i_k,i_k})^{\chi(j_{k-1}=i_k)}\\
&\kern1cm
+
X_{ i_{\ell+1} ,i_{\ell+1}}
(-1)^{\ell-1}\prod _{k=1} ^{\ell-1}X_{i_k,i_k}^{\chi(i_k=j_k)}
(1-X_{i_{k+1},i_{k+1}})^{\chi(j_{k}=i_{k+1})}\\
&=(-1)^{\ell}(1-X_{i_{\ell+1},i_{\ell+1}})
\prod _{k=1} ^{\ell}X_{i_k,i_k}^{\chi(i_k=j_k)}
(1-X_{i_{k},i_{k}})^{\chi(j_{k-1}=i_{k})}
\end{align*}
for the case where $i_1\le j_1\le j_2\le\dots\le j_\ell$.
If that condition is violated,
we get zero for both summands on the right-hand side of~\eqref{eq:8},
again due to Lemma~\ref{lem:X1} and the induction hypothesis.
This is in accordance with~\eqref{eq:X2}.
\item
If $i_{\ell+1} < j_\ell$, then the last 
column of~$X_{I,J}$ is a column of~1's. Consequently, the determinant
$\det(\mathbf1\,X_{I,J})$ vanishes, as
desired.
\item
If $i_{\ell+1} > j_\ell$ then the last row of $X_{I,J}$ is a row of 0's.
We may therefore expand the determinant $\det(\mathbf1\,X_{I,J})$
along the last row and obtain
$$
\det(\mathbf1\,X_{I,J})=(-1)^{\ell}
\det(X_{I\backslash\{i_{\ell+1}\},J}).
$$
If $i_1\le j_1\le i_2\le \dots\le j_\ell$, then, by
Lemma~\ref{lem:X1}, we infer
$$
\det(\mathbf1\,X_{I,J})=(-1)^{\ell}
\prod _{k=1} ^{\ell}X_{i_k,i_k}^{\chi(i_k=j_k)}
(1-X_{i_{k},i_{k}})^{\chi(j_{k-1}=i_{k})},
$$
which is in accordance with \eqref{eq:X2} since $j_\ell<i_{\ell+1}$.
If the above inequality chain does not hold then, again by
Lemma~\ref{lem:X1}, the determinant
$\det(X_{I\backslash\{i_{\ell+1}\},J})$ vanishes, and hence also
$\det(\mathbf1\,X_{I,J})$.
\end{itemize}
This completes the proof of the lemma.
\end{proof}

We are now in the position to state and prove our asymmetric extension
of Byun's minor summation formula.

\begin{theorem} \label{thm:AB}
Let $m$ and $n$ be positive integers, and
let $A$ and $B$ be $m\times n$ matrices.
Then we have
\begin{multline} \label{eq:AB}
\det\bigl(AU_nB^t+BU_nA^t+AB^t\bigr)\\
=\left(\sum_{1\le i_1\le j_1<i_2\le j_2<\cdots\le n}
\det(A^{i_1}B^{j_1}A^{i_2}B^{j_2}\cdots)\right)
\left(\sum_{1\le i_1< j_1\le i_2< j_2\le\cdots\le n}
\det(B^{i_1}A^{j_1}B^{i_2}A^{j_2}\cdots)\right),
\end{multline}
where $A^{i}$ is short for $A^{\{i\}}$, that is, for the $i$-th column
of~$A$, and where $A^{i_1}B^{j_1}A^{i_2}B^{j_2}\cdots$ denotes 
concatenation of $m$~columns, so that the last column in the
concatenation is $B^{j_{m/2}}$ or $A^{i_{(m+1)/2}}$, depending on the
parity of~$m$. Analogous conventions apply to the second factor
on the right-hand side of~\eqref{eq:AB}.
\end{theorem}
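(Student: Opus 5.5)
We specialise Theorem~\ref{thm:main1} to the matrix $X=U_n+\Id_n$, that is, the upper triangular matrix with $1$'s on and above the diagonal. Then $\J-X^t=\J-L_n-\Id_n=U_n$, where $L_n=U_n^t$. Hence
$$
AXB^t+B(\J-X^t)A^t=AU_nB^t+AB^t+BU_nA^t,
$$
which is exactly the matrix on the left-hand side of~\eqref{eq:AB}. So the determinant factors as $f_{A,B}(X)f_{B,A}(U_n)$ for even~$m$, and as $g_{A,B}(X)g_{B,A}(U_n)$ for odd~$m$. It remains to identify these four quantities with the sums in~\eqref{eq:AB}.

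For this we use Lemmas~\ref{lem:X1} and~\ref{lem:X2}. Both apply to $X$ with diagonal entries $X_{i,i}=1$, and to $U_n$ with diagonal entries~$0$.

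\emph{First factor, $X=U_n+\Id_n$.} The factor $(1-X_{i,i})$ vanishes, so equalities $j_{k-1}=i_k$ are excluded, while $i_k=j_k$ is allowed with weight~$1$. Lemma~\ref{lem:X1} then gives $\det X_{I,J}=1$ if $i_1\le j_1<i_2\le j_2<\cdots\le j_\ell$, and $0$ otherwise. Lemma~\ref{lem:X2} gives $\det(\mathbf1\,X_{I,J})=(-1)^\ell$ under the analogous chain ending with $j_\ell<i_{\ell+1}$, and $0$ otherwise.

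\emph{Second factor, $U_n$.} Here $X_{i,i}=0$, so $i_k=j_k$ is excluded and $j_{k-1}=i_k$ is allowed with weight~$1$. This produces the chains $i_1<j_1\le i_2<j_2\le\cdots$.

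Next we convert $\det(A^IB^J)$, where $A^I$ and $B^J$ are blocks of sorted columns, into $\det(A^{i_1}B^{j_1}A^{i_2}B^{j_2}\cdots)$, with the columns interleaved. Moving the columns of $B^J$ into the interleaved positions is a fixed permutation of the $m$ columns that depends only on~$m$. Its sign is $(-1)^{\binom{\ell}{2}}$ with $\ell=m/2$ in the even case. In the odd case $|I|=\ell+1$ and $|J|=\ell$, and the sign is again $(-1)^{\binom{\ell+1}{2}-?}$; the exact exponent is computed by counting the inversions of the interleaving.

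Finally we collect the global signs. In the even case these come from the interleaving sign, applied to each factor. In the odd case there is additionally the factor $(-1)^\ell$ from Lemma~\ref{lem:X2}, once in each factor. The total sign multiplying the product is a square of a single sign, or a product of matching signs in the two factors, so it equals~$+1$. The cleanest way to verify this is to note that both factors acquire the same sign $\varepsilon_m$, so the product carries $\varepsilon_m^2=1$.

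The main obstacle is this sign bookkeeping: checking that the interleaving sign and the $(-1)^\ell$ from Lemma~\ref{lem:X2} contribute identically to both factors, and making sure that the roles of $I$ and $J$ in $f_{B,A}(U_n)$ and $g_{B,A}(U_n)$ (rows of $U_n$ indexed by $B$-columns) match the ordering $B^{i_1}A^{j_1}\cdots$ in the second sum. As a sanity check we take $m=1,2$ with $A=B$ and compare with Byun's formula.
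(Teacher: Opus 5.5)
Your proposal is correct and takes the same route as the paper. You specialise Theorem~\ref{thm:main1} to $X=U_n+\Id_n$ (so that $\J-X^t=U_n$), evaluate the minors with Lemmas~\ref{lem:X1} and~\ref{lem:X2}, and note that the interleaving sign, together with the factor $(-1)^\ell$ for odd $m$, is identical in both factors and therefore cancels. For the record, your placeholder exponent in the odd case is just $\binom{\ell+1}{2}$, although, as you correctly argue, its value does not affect the result.
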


\begin{remark}
By setting $A=B$ in the above theorem, we retrieve Byun's minor
summation formula in Theorem~\ref{thm:Byun} since equalities of
indices in the sums on the right-hand side of~\eqref{eq:AB} produce
vanishing determinants.
\end{remark}

\begin{proof}[Proof of Theorem~\ref{thm:AB}]
We choose $X=U_n+\Id_n$ in Theorem~\ref{thm:main1}.
Then the left-hand sides of~\eqref{eq:1} and~\eqref{eq:1o} become
$$
\det\bigl(A(U_n+\Id_n)B^t+BU_nA^t\bigr),
$$
which is exactly the left-hand side of~\eqref{eq:AB}.

Let first $m$ be even.
By definition, we have
$$
f_{A,B}(U_n+\Id_n)=\underset {|I|=|J|=m/2}{\sum_{I,J\subseteq[n]}}
\det \bigl((U_n+\Id_n)_{I,J}\bigr) \det(A^IB^J).
$$
If we now use Lemma~\ref{lem:X1} with $X=U_n+\Id_n$, then we
see that $\det (U_n+\Id_n)_{I,J}=1$ if
$1\le i_1\le j_1<i_2\le j_2<\cdots\le j_{m/2}\le n$ and otherwise
the determinant equals zero.
Consequently, we obtain
\begin{align*}
f_{A,B}(U_n+\Id_n)
&={\sum_{1\le i_1\le j_1<i_2\le j_2<\cdots\le j_{m/2}\le n}}
\det(A^{\{i_1,\dots,i_{m/2}\}}B^{\{j_1,\dots,j_{m/2}\}})\\
&=(-1)^{\binom {m/2}2}
{\sum_{1\le i_1\le j_1<i_2\le j_2<\cdots\le j_{m/2}\le n}}
\det(A^{i_1}B^{j_1}A^{i_2}B^{j_2}\cdots B^{j_{m/2}}).
\end{align*}
Similarly, if we use Lemma~\ref{lem:X1} with $X=U_n$, then we
see that $\det (U_n)_{I,J}=1$ if
$1\le i_1< j_1\le i_2< j_2\le\cdots< j_{m/2}\le n$ and  otherwise
the determinant equals zero.
Therefore, we obtain
\begin{align*}
f_{B,A}\bigl(\J -(U_n+\Id_n)^t\bigr)&=f_{B,A}(U_n)\\
&={\sum_{1\le i_1< j_1\le i_2< j_2\le\cdots< j_{m/2}\le n}}
\det(B^{\{i_1,\dots,i_{m/2}\}}A^{\{j_1,\dots,j_{m/2}\}})\\
&=(-1)^{\binom {m/2}2}
{\sum_{1\le i_1< j_1\le i_2< j_2\le\cdots< j_{m/2}\le n}}
\det(B^{i_1}A^{j_1}B^{i_2}A^{j_2}\cdots A^{j_{m/2}}).
\end{align*}
If these findings are used in~\eqref{eq:1} then the result
is~\eqref{eq:AB} for the case of even~$m$.

\medskip
If $m$ is odd, we proceed completely analogously. Now the argument is
based on~\eqref{eq:1o} and Lemma~\ref{lem:X2}.
\end{proof}

In view of Theorem~\ref{thm:main2}, we have implicitly proved 
asymmetric extensions of Okada's minor summation formula in
Theorem~\ref{thm:Okada}. 

\begin{theorem} \label{thm:AB2}
Let $m$ and $n$ be positive integers, where $m$ is even.
Furthermore, let $A$ and $B$ be $m\times n$ matrices.
Then, with notation from Theorem~\ref{thm:AB}, we have
\begin{equation} \label{eq:AB22}
\sum_{1\le i_1< j_1\le i_2< j_2\le\cdots\le i_{m/2}<j_{m/2}\le n}
\det(A^{i_1}B^{j_1}A^{i_2}B^{j_2}\cdots A^{i_{m/2}}B^{j_{m/2}})
=\Pf\bigl(AU_nB^t-BU_n^tA^t\bigr)
\end{equation}
and
\begin{multline} \label{eq:AB23}
\sum_{1\le i_1\le j_1< i_2\le j_2\le\cdots<i_{m/2}\le j_{m/2}\le n}
\det(A^{i_1}B^{j_1}A^{i_2}B^{j_2}\cdots A^{i_{m/2}}B^{j_{m/2}})
\\
=\Pf\bigl(A(U_n+\Id_n)B^t-B(U_n^t+\Id_n)A^t\bigr).
\end{multline}
\end{theorem}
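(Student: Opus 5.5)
The plan is to specialise Theorem~\ref{thm:main2} to the two choices $X=U_n$ and $X=U_n+\Id_n$, exactly as was done for $f_{A,B}$ in the proof of Theorem~\ref{thm:AB}. Since $m$ is even, Theorem~\ref{thm:main2} gives
$$
\Pf\bigl(AXB^t-BX^tA^t\bigr)=(-1)^{\binom{m/2}2}\underset{|I|=|J|=m/2}{\sum_{I,J\subseteq[n]}}\det X_{I,J}\,\det(A^IB^J).
$$
For $X=U_n$ we have $X^t=U_n^t$, and the left-hand side is the right-hand side of~\eqref{eq:AB22}. For $X=U_n+\Id_n$ we have $X^t=U_n^t+\Id_n$, and the left-hand side is the right-hand side of~\eqref{eq:AB23}.

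Next I evaluate the minors $\det X_{I,J}$ by Lemma~\ref{lem:X1}. Write $I=\{i_1<\dots<i_{m/2}\}$ and $J=\{j_1<\dots<j_{m/2}\}$.

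For $X=U_n$ all diagonal entries $X_{i,i}$ vanish. The factor $X_{i_k,i_k}^{\chi(i_k=j_k)}$ therefore forces $i_k<j_k$, while the factor $(1-X_{i_k,i_k})^{\chi(j_{k-1}=i_k)}$ equals $1$. Hence $\det (U_n)_{I,J}=1$ exactly when $i_1<j_1\le i_2<j_2\le\cdots<j_{m/2}$, and $\det (U_n)_{I,J}=0$ otherwise.

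For $X=U_n+\Id_n$ all diagonal entries equal $1$. Now the roles flip: the factor $(1-X_{i_k,i_k})^{\chi(j_{k-1}=i_k)}$ forces $j_{k-1}<i_k$, and equality $i_k=j_k$ is allowed. Hence $\det(U_n+\Id_n)_{I,J}=1$ exactly when $i_1\le j_1<i_2\le j_2<\cdots\le j_{m/2}$, and the minor vanishes otherwise. These are precisely the summation ranges in~\eqref{eq:AB22} and~\eqref{eq:AB23}.

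It remains to convert $\det(A^IB^J)=\det(A^{i_1}\cdots A^{i_{m/2}}B^{j_1}\cdots B^{j_{m/2}})$ into the interleaved determinant $\det(A^{i_1}B^{j_1}A^{i_2}B^{j_2}\cdots)$. The permutation that interleaves the columns moves $B^{j_k}$ past $A^{i_{k+1}},\dots,A^{i_{m/2}}$. It therefore has $\sum_{k=1}^{m/2}(m/2-k)=\binom{m/2}2$ inversions, so the two determinants differ by the sign $(-1)^{\binom{m/2}2}$. This sign cancels the one from Theorem~\ref{thm:main2}, and both identities follow.

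I expect no real obstacle, since everything is a direct specialisation. The only care needed is bookkeeping: reading the strict versus weak inequalities correctly off Lemma~\ref{lem:X1} in the two cases, and getting the inversion count of the interleaving permutation right.
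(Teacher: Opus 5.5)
Your proposal is correct and follows the paper's own (implicit) argument. The paper obtains Theorem~\ref{thm:AB2} by combining Theorem~\ref{thm:main2} with the evaluations of $\det(U_n)_{I,J}$ and $\det(U_n+\Id_n)_{I,J}$ via Lemma~\ref{lem:X1}, which it already carried out in the proof of Theorem~\ref{thm:AB}, and you also correctly check that the interleaving sign $(-1)^{\binom{m/2}2}$ cancels the one in \eqref{eq:4}, a step the paper leaves to the reader.
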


\subsection{A Cauchy-type identity for skew Schur functions}
\label{sec:Cauchy}

Here we use Theorem~\ref{thm:AB2} from the previous section to derive 
a Cauchy-type identity for skew Schur functions. 

Before we can state
the identity, we need to recall a few basics from the theory of
symmetric functions; see \cite{MacdAC} and \cite[ch.~7]{StanBI}
for in-depth introductions into that theory.
Let $\mathbf x=(x_1,x_2,\dots)$ and $\mathbf y=(y_1,y_2,\dots)$
be infinite sequences of variables.
The $n$-th {\it complete homogeneous symmetric function}
$h_n(\mathbf x)=h_n(x_1,x_2,\dots)$ is defined by
$$
h_n(\mathbf x)=\sum_{1\le i_1\le i_2\le\dots\le i_n}
x_{i_1}x_{i_2}\cdots x_{i_n}.
$$
A {\it partition} $\la$ is
a non-increasing sequence $(\la_1,\la_2,\dots,\la_\ell)$ of
non-negative integers, for some~$\ell$. 
Given two partitions~$\la$ and~$\mu$ with
$\mu_i\le\la_i$ for all~$i$, 
the {\it skew Schur function} $s_{\la/\mu}(\mathbf x)$
is defined by
\begin{equation} \label{eq:Schur} 
s_{\la/\mu}(\mathbf x)=\det\bigl(h_{\la_j-j-\mu_i+i}(\mathbf 
x)\bigr)_{1\le i,j\le \ell}.
\end{equation}

\begin{theorem} \label{thm:Cauchy}
Let $m$ and $n$ be positive integers, where $m$ is even.
Then we have
\begin{multline} \label{eq:Cauchy}
\underset{|R|=|S|=m/2}{\sum_{R\cup S=[m]}}
(-1)^{\sum_{r\in R}(r-1)}
\underset{\la(J)/\la(I)\text{ \em hor.\ strip}}
{\sum_{I,J\subseteq[n],\,|I|=|J|=m/2}}
s_{\la(I)/\la(R)}(\mathbf x)\cdot s_{\la(J)/\la(S)}(\mathbf y)\\
=\Pf\left(\sum_{1\le k\le l\le n}\bigl(
h_{k-i}(\mathbf x)\cdot h_{l-j}(\mathbf y)
-h_{l-i}(\mathbf y)\cdot h_{k-j}(\mathbf x)
\bigr)\right)_{1\le i,j\le m},
%=\Pf\bigl(AU_nB^t-BU_n^tA^t\bigr).
\end{multline}
where $\la\bigl((i_1,i_2,\dots,i_{m/2})\bigr)
:=(i_{m/2}-\frac {m} {2},\dots,i_2-2,i_1-1)$, and
$\la/\mu$ is a horizontal strip if $\la_1\ge\mu_1\ge\la_2\ge\mu_2
\ge\cdots$.
\end{theorem}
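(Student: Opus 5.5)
The plan is to specialise \eqref{eq:AB23} of Theorem~\ref{thm:AB2} to the $m\times n$ matrices
$$
A=\bigl(h_{k-i}(\mathbf x)\bigr)_{1\le i\le m,\ 1\le k\le n},\qquad
B=\bigl(h_{l-j}(\mathbf y)\bigr)_{1\le j\le m,\ 1\le l\le n},
$$
with the usual convention $h_r=0$ for $r<0$ and $h_0=1$.

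\emph{Right-hand side.} First I check that the Pfaffian on the right of \eqref{eq:AB23} is the one in \eqref{eq:Cauchy}. Since $(U_n+\Id_n)_{k,l}=\chi(k\le l)$, the $(i,j)$-entry of $A(U_n+\Id_n)B^t$ is $\sum_{k\le l}h_{k-i}(\mathbf x)h_{l-j}(\mathbf y)$. In $B(U_n^t+\Id_n)A^t$ the $(i,j)$-entry is $\sum_{k\le l}B_{i,l}A_{j,k}=\sum_{k\le l}h_{l-i}(\mathbf y)h_{k-j}(\mathbf x)$. These are exactly the two terms inside the Pfaffian of \eqref{eq:Cauchy}.

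\emph{Left-hand side.} Let $I=\{i_1<\dots<i_{m/2}\}$ and $J=\{j_1<\dots<j_{m/2}\}$. Permuting the interleaved columns into the order $A^IB^J$ gives
$$
\det(A^{i_1}B^{j_1}\cdots)=(-1)^{\binom{m/2}2}\det(A^IB^J).
$$
Next I expand $\det(A^IB^J)$ by Laplace along its first $m/2$ columns:
$$
\det(A^IB^J)=\sum_{R\,\dot\cup\, S=[m]}(-1)^{\sum_{r\in R}r+\binom{m/2+1}2}\det(A_{R,I})\det(B_{S,J}).
$$
Since $\sum_{r\in R}(r-1)=\sum_{r\in R}r-\tfrac m2$, the combined sign is $(-1)^{\sum_{r\in R}(r-1)}$ times $(-1)^{m/2+\binom{m/2+1}2+\binom{m/2}2}=(-1)^{m/2+(m/2)^2}=1$. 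So it matches the sign in \eqref{eq:Cauchy} exactly. This sign bookkeeping is the step I expect to need the most care.

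\emph{Identifying Schur functions.} Write $p=m/2$ and $R=\{r_1<\dots<r_p\}$. The minor is $\det(A_{R,I})=\det\bigl(h_{i_b-r_a}(\mathbf x)\bigr)_{a,b}$. Reverse both the row and the column order, which costs no sign, and set $\la=\la(I)$, $\mu=\la(R)$, so that $\la_j=i_{p+1-j}-(p+1-j)$ and $\mu_i=r_{p+1-i}-(p+1-i)$. Then the entries become $h_{\la_j-j-\mu_i+i}(\mathbf x)$, and \eqref{eq:Schur} gives $\det(A_{R,I})=s_{\la(I)/\la(R)}(\mathbf x)$. Likewise $\det(B_{S,J})=s_{\la(J)/\la(S)}(\mathbf y)$.

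Two small points remain here. When $\la(R)\not\subseteq\la(I)$ the determinant vanishes because of the $h_{<0}=0$ convention, which is the standard behaviour. If one insists on the containment hypothesis in the definition of $s_{\la/\mu}$, I will remark that these terms are zero on both readings.

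\emph{Summation range.} Finally, the range $i_1\le j_1<i_2\le j_2<\cdots\le j_p$ in \eqref{eq:AB23} is equivalent to $\la(J)/\la(I)$ being a horizontal strip. Indeed, $\la(J)_k\ge\la(I)_k$ reads $j_{p+1-k}\ge i_{p+1-k}$. Also $\la(I)_k\ge\la(J)_{k+1}$ reads $i_{p+1-k}-(p+1-k)\ge j_{p-k}-(p-k)$, that is, $i_{p+1-k}>j_{p-k}$.

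Summing over $I$, $J$ and then $R$, $S$, and interchanging the order of summation, yields \eqref{eq:Cauchy}.
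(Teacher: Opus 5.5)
Your proposal is correct and follows the paper's own route. The paper likewise specialises \eqref{eq:AB23} to $A=(h_{k-i}(\mathbf x))$ and $B=(h_{l-j}(\mathbf y))$, reorders the columns to $A^IB^J$, Laplace-expands along the columns indexed by $I$, and reads off Jacobi--Trudi skew Schur functions; your explicit sign computation, the row/column reversal, and the check that $i_1\le j_1<i_2\le\cdots<i_{m/2}\le j_{m/2}$ is exactly the horizontal-strip condition fill in details the paper leaves to the reader.
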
 

\begin{proof}
We choose $A=(h_{j-i}(\mathbf x))_{1\le i\le m,\,1\le j\le n}$
and $B=(h_{j-i}(\mathbf y))_{1\le i\le m,\,1\le j\le n}$ in
Theorem~\ref{thm:AB2}. We rewrite~\eqref{eq:AB23} in the form
\begin{align*}
\Pf\bigl(A(U_n+\Id_n)B^t-B(U_n^t+\Id_n)A^t\bigr)&=
\sum_{1\le i_1\le j_1<i_2\le j_2<\cdots\le j_{m/2}\le n}
\det(A^{i_1}B^{j_1}A^{i_2}B^{j_2}\cdots B^{j_{m/2}})\\
&=\sum_{1\le i_1\le j_1<i_2\le j_2<\cdots\le j_{m/2}\le n}
(-1)^{\binom {m/2}2}\det(A^IB^J).
\end{align*}
Now we do a (generalised)
Laplace expansion of the determinant on the right-hand
side with respect to the columns indexed by~$I$. In view of~\eqref{eq:Schur},
this leads  directly to~\eqref{eq:Cauchy}.
\end{proof}

\bibliographystyle{plain}
\bibliography{minorsum}

\section*{Appendix: The Pfaffian is an irreducible polynomial}

\setcounter{equation}{0}%
%\global\def\thetheorem{\mbox{A.\arabic{theorem}}}
\global\def\thetheorem{\mbox{A}}
\global\def\theequation{\mbox{A.\arabic{equation}}}

\begin{lemma} \label{lem:3}
The Pfaffian of an even size skew-symmetric matrix~$Y$ is an irreducible
polynomial in the entries of\/~$Y$.
\end{lemma}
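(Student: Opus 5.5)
We argue by induction on $m=2k$, using the Laplace (row) expansion of the Pfaffian together with the fact that the Pfaffian is linear in each of its variables. The case $m=2$ is trivial, since $\Pf(Y)=Y_{1,2}$ is a single variable.

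Let $P=\Pf(Y)$, regarded as a polynomial in the independent variables $Y_{i,j}$ with $i<j$. Suppose $P=FG$ with neither $F$ nor $G$ constant. The polynomial $P$ is homogeneous of degree~$k$, and it has degree exactly~$1$ in each variable $Y_{i,j}$, because every perfect matching uses each pair at most once. Hence each variable $Y_{i,j}$ occurs in exactly one of $F$ and $G$. Moreover, $P$ is multilinear with respect to row~$1$: every monomial contains exactly one variable from $\{Y_{1,2},\dots,Y_{1,m}\}$.

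Laplace expansion along the first row gives
$$P=\sum_{j=2}^m(-1)^{j}Y_{1,j}\Pf(Y(1,j)).$$
Here $\Pf(Y(1,j))$ is nonzero and does not involve any variable from row~$1$. Since every monomial of $P$ has exactly one row-$1$ variable, all row-$1$ variables must lie in the same factor; say they all lie in $F$.

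If $F$ is linear in the row-$1$ variables, write $F=\sum_j Y_{1,j}F_j+F_0$. Since no monomial of $P$ is free of row-$1$ variables, we get $F_0G=0$, so $F_0=0$. It follows that $G$ divides every $\Pf(Y(1,j))$. In particular $G$ divides $\Pf(Y(1,2))$ and $\Pf(Y(1,3))$.

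By induction, $\Pf(Y(1,2))$ is irreducible, so $G$ is, up to a scalar, equal to $\Pf(Y(1,2))$ (recall $G$ is nonconstant). Now $\Pf(Y(1,2))$ involves the variable $Y_{3,4}$ when $m\ge4$. However, $\Pf(Y(1,3))$ does not involve any variable with index~$3$. So $G$ cannot divide $\Pf(Y(1,3))$, which is a contradiction.

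The main point to take care over is the claim that each variable lies in exactly one factor and that the row-$1$ variables all sit in one factor. Both follow from degree considerations: $\deg_{Y_{i,j}}P=1$, and the total degree in the row-$1$ block is $1$ in every monomial. Since $F$ and $G$ are homogeneous, their degrees in the row-$1$ block add up to~$1$. Hence one factor has degree $0$ in this block, and the other has every monomial of degree exactly $1$ in it. This completes the induction.
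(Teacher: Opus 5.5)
Your proof is correct, but it takes a different route from the paper.

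\textbf{The paper's route.} It argues directly, with no induction and no Laplace expansion:
\begin{itemize}
\item Since $\Pf(Y)$ has degree $1$ in each variable, every $Y_{i,j}$ lies in exactly one factor.
\item Two variables that share an index never occur together in a monomial, so they must lie in the same factor.
\item Starting from $Y_{1,2}\in g$, this forces every $Y_{1,j}$ and $Y_{2,j}$ into $g$.
\item Any $Y_{i,j}$ with $i,j\ge 3$ in $h$ would then exclude $Y_{1,i}$ from both factors, which is impossible.
\end{itemize}
In effect, the paper only uses that the ``shares an index'' graph on the variables is connected.

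\textbf{Your route.} You use the row-$1$ grading to put all $Y_{1,j}$ into $F$. The Laplace expansion then shows that $G$ divides every $\Pf(Y(1,j))$, and you finish with the induction hypothesis. This is more algebraic (homogeneity of factors, divisibility of sub-Pfaffians) and somewhat longer, but sound.

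Two small remarks:
\begin{enumerate}
\item When you write ``$F$ and $G$ are homogeneous'', what you need is homogeneity with respect to the row-$1$ grading (weight $1$ on each $Y_{1,j}$, weight $0$ otherwise). This holds because factors of a graded-homogeneous polynomial are graded-homogeneous; total-degree homogeneity alone would not give it. Alternatively, argue as the paper does: if $Y_{1,a}$ were in $F$ and $Y_{1,b}$ in $G$, the coefficient of $Y_{1,a}Y_{1,b}$ in $FG$ would be nonzero.
\item The induction is unnecessary. $G$ is nonconstant and contains no row-$1$ variable, so it involves some $Y_{a,b}$ with $2\le a<b$. But $\Pf(Y(1,a))$ does not involve $Y_{a,b}$, so by comparing degrees in $Y_{a,b}$, $G$ cannot divide it. With this shortcut your argument becomes as direct as the paper's.
\end{enumerate}
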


\begin{proof}
The proof is modelled after \cite[proof of Theorem~1,
pp.~176/177]{BochAA}.

The assertion is obvious if $Y$ has size~2. We may therefore assume
that $Y$ has size at least~4.
Suppose that
$$
\Pf(Y)=g(Y)\cdot h(Y),
$$
where $g(Y)$ and $h(Y)$ are polynomials in the entries of~$Y$.
The polynomial $\Pf(Y)$ is linear in each of the entries~$Y_{i,j}$
of~$Y$. In particular, it is linear in $Y_{1,2}$, and therefore
one of $g(Y)$ and $h(Y)$ contains $Y_{1,2}$, but not the
other. Without loss of generality let $g(Y)$ contain~$Y_{1,2}$.
Then $h(Y)$ must not contain any of the entries $Y_{1,j}$
and~$Y_{2,j}$ with $j\ge3$, due to the defining expansion of the
Pfaffian in terms of non-crossing matchings. If $h(Y)$ is not constant,
then it contains some $Y_{i,j}$ with $i,j\notin\{1,2\}$. In its turn,
this implies that $g(Y)$ must not contain~$Y_{1,i}$. However, we
already inferred that $h(Y)$ does not contain~$Y_{1,i}$. We have
reached a contradiction. Consequently, the polynomial $h(Y)$ is
actually constant, and hence $\Pf(Y)$ is irreducible.
\end{proof}

\end{document}